\def\BibTeX{{\rm B\kern-.05em{\sc i\kern-.025em b}\kern-.08emT\kern-.1667em\lower.7ex\hbox{E}\kern-.125emX}}
\begin{document}

%
\title[An incremental descent method for multi-objective optimization]{An incremental descent method for multi-objective optimization}

%
\author{I. F. D. Oliveira}
\email{ivodavid@gmail.com}
\affiliation{%
  \institution{Institute of Science, Engineering and Technology, Federal University of the Valleys of Jequitinhonha and Mucuri}
  \streetaddress{R. Cruzeiro, 1}
  \city{Te\'ofilo Otoni}
  \state{Minas Gerais}
  \country{Brazil}
  \postcode{39803-371}
}
\author{R. H. C. Takahashi}
\email{taka@mat.ufmg.br}
\affiliation{%
  \institution{Department of Mathematics, Federal University of Minas Gerais}
  \streetaddress{Av. Pres. Ant\^onio Carlos, 6627}
  \city{Belo Horizonte}
  \state{Minas Gerais}
  \country{Brazil}
  \postcode{31270-901}
}

%
\renewcommand{\shortauthors}{Oliveira and Takahashi}

%
\begin{abstract}
 Current state-of-the-art multi-objective optimization solvers, by computing gradients of all $m$ objective functions per iteration, produce after $k$ iterations a measure of proximity to critical conditions that is upper-bounded by $O(1/\sqrt{k})$ when the objective functions are assumed to have $L-$Lipschitz continuous gradients; i.e. they require $O(m/\epsilon^2)$  gradient and function computations to produce a measure of proximity to critical conditions bellow some target $\epsilon$. We reduce this to $O(1/\epsilon^2)$ with a method that requires only a constant number of gradient and function computations per iteration; and thus, we obtain for the first time a multi-objective descent-type method with a query complexity cost that is unaffected by increasing values of $m$. For this, a brand new multi-objective descent direction is identified, which we name the \emph{central descent direction}, and, an incremental approach is proposed. Robustness properties of the central descent direction are established, measures of proximity to critical conditions are derived, and, the incremental strategy for finding solutions to the multi-objective problem is shown to attain convergence properties unattained by previous methods. To the best of our knowledge, this is the first method to achieve this with no additional a-priori information on the structure of the problem, such as done by scalarizing techniques, and, with no pre-known information on the regularity of the objective functions other than Lipschitz continuity of the gradients.
\end{abstract}

%
%
\begin{CCSXML}
<ccs2012>
   <concept>
       <concept_id>10002950.10003714.10003716.10011138.10011140</concept_id>
       <concept_desc>Mathematics of computing~Nonconvex optimization</concept_desc>
       <concept_significance>500</concept_significance>
       </concept>
   <concept>
       <concept_id>10003752.10003809.10003716.10011138.10011140</concept_id>
       <concept_desc>Theory of computation~Nonconvex optimization</concept_desc>
       <concept_significance>500</concept_significance>
       </concept>
 </ccs2012>
\end{CCSXML}

\ccsdesc[500]{Mathematics of computing~Nonconvex optimization}
\ccsdesc[500]{Theory of computation~Nonconvex optimization}

%
\keywords{gradient descent, multi-objective optimization}

%

%
\maketitle

\section{Introduction}

In this paper we deal with the problem of finding \emph{critical} points of multi-objective optimization problems. The multi-objective function $F:\mathbb{R}^n\to\mathbb{R}^m$ is defined as the concatenation of $m$ mono-objective functions as $F(\boldsymbol{x})\equiv [f_1(\boldsymbol{x}), ..., f_m(\boldsymbol{x})]^T$ that are assumed to have L-Lipschitz continuous gradients, i.e. for some $L\geq 0$ and for all $\boldsymbol{x},\boldsymbol{y} \in \mathbb{R}$ and all $i = 1,...,m$ the inequality  $||\nabla f_i(\boldsymbol{x})-\nabla f_i(\boldsymbol{y})||\leq L||\boldsymbol{x}-\boldsymbol{y}||$ holds. And, following the definition of \cite{fliege1}, a point $\boldsymbol{x}\in \mathbb{R}^n$ is said to be \emph{critical} (also referred to as stationary) if and only if there does not exist a direction $\boldsymbol{v}\in \mathbb{R}^n$ such that $\nabla f_i(\boldsymbol{x})^T \boldsymbol{v}< 0$ for all $i = 1,..., m$. 
 
The search for stationary points is motivated by the fact that standard first order conditions for local optimality, akin to $\nabla f(\boldsymbol{x}) = 0$ in mono-objective optimization, is that $\boldsymbol{x}$ be stationary \cite{luenberger}. Descent direction algorithms, analogous to those employed in mono-objective problems, have been devised and analysed extensively for the multi-objective setting, including: (i) \emph{steepest descent} strategies \cite{fliege1,vieira,fliege3}; (ii) \emph{Newton} type methods \cite{fliege2,povalej}; (iii) \emph{projected gradient} strategies \cite{drummond,cruz}; amongst others \cite{perez,gebken,moudden,tanabe}. These generally follow the same structure as mono-objective optimization strategies:

\begin{algorithm}[H]
\SetAlgoLined
initialize $\boldsymbol{\hat{x}}\in \mathbb{R}^n$\; 
 \While{True}{
  compute a descent direction $\boldsymbol{d}$\;
  find an appropriate step-size $\alpha \in \mathbb{R}_+$\;
  update the estimate  $\boldsymbol{\hat{x}} \leftarrow \boldsymbol{\hat{x}} + \alpha \boldsymbol{d}$\;
 }
 \caption{\label{alg:descent_direction}Descent direction method}
\end{algorithm}

The multi-objective steepest descent method is perhaps the benchmark to which other methods are typically compared \cite{fliege1,fliege2,fliege3,fukuda,cocchi}. For any given estimate $\boldsymbol{\hat{x}}$, the steepest descent method defines $\boldsymbol{d}$ as $\boldsymbol{d}\equiv \boldsymbol{V}_s(\boldsymbol{\hat{x}})$ where
\begin{equation} \label{eq:steepest_descent}
\boldsymbol{V}_s(\hat{x}) \equiv \text{arg} \min_{\boldsymbol{V}\in\mathbb{R}^n} \max_{i = 1,...,m}\nabla f_i(\boldsymbol{\hat{x}})^T \boldsymbol{V}  +\frac{1}{2}||\boldsymbol{V}||^2. 
\end{equation}
And, if line 3 of Algorithm \ref{alg:descent_direction} is implemented with $\boldsymbol{d}\equiv \boldsymbol{V}_s(\boldsymbol{\hat{x}})$, and  line 4 is implemented with step-size $\alpha$ produced by Armijo's backtracking procedure delineated in \cite{fliege1} or the Fibonacci search of \cite{vieira}, then, under mild technical assumptions, global convergence is guaranteed irrespective of the starting point, analogous to the guarantees of mono-objective gradient descent. See Theorem 1 of \cite{fliege1} and Theorem 3 of \cite{vieira} for the details. Furthermore, the computational complexity of the mono-objective gradient descent method, which upperbounds the norm of the gradient by $O(1/\sqrt{k})$ after $k$ steps, is also echoed by it's multi-objective counterpart as the analogous measure of proximity to critical conditions $\min_{l = 1,...,k} ||\boldsymbol{V}_s(\hat{\boldsymbol{x}}^l)||$ is also upper-bounded by  $O(1/\sqrt{k})$ after $k$ steps under the Lipschitz continuous gradient assumption; see Theorem 3.1 of \cite{fliege3}. This worst case query complexity, in the mono-objective setting, is known to be optimal and is not improved on even with the addition of second order information, such as with the use of Newton's method; see Section 3 of \cite{cartis}. Analogous convergence guarantees to those of gradient descent in mono-objective optimization are also known for convex functions as well as strongly convex functions when the multi-objective steepest descent method is employed \cite{fliege3,fukuda}. And furthermore, similar results have been developed for Newton-type methods as well as projected gradient-type methods in the multi-objective optimization literature \cite{fliege2,povalej,drummond,cruz}.

\paragraph{Problem description} While multi-objective steepest descent recovers the $O(1/\sqrt{k})$ rate of convergence of mono-objective problem solving, since  the computation of $\boldsymbol{d}$ in each iteration requires querying all $m$ gradients, and, the production of $\alpha$ in both exact and inexact line searches requires querying all $m$ functions per iteration, the overall query complexity of multi-objective steepest descent is still $m$ times that of mono-objective optimization; i.e. when we take into consideration the dependence on $m$ the number of calls to gradient and function evaluations to upper-bound the measure of proximity to critical conditions by $\epsilon$ is more precisely of the order of $ O(m/\epsilon^2)$. That is, with the same query complexity budget, gradient descent can produce solutions to each of the $m$ mono-objective minimization problems separately with the similar $\epsilon$ worst case bounds as one run of the multi-objective steepest descent method. A close analysis of this complexity gap produces a somewhat puzzling set of informal propositions:\\
\\
\textbf{P1.} \textit{The requirement `$\tilde{x}$ is stationary with respect to $F(x) \equiv [f_1(x),..., f_m(x)]^T$' is a relaxation of the mono-objective equivalent `$\tilde{x}$ is stationary with respect to $F(x) \equiv [f_1(x)]$'.}\\
\textbf{P2.} \textit{Multi-objective steepest descent with $m$ objective functions collects $m$ times more queries of gradient and function values per iteration than gradient descent applied to the mono-objective problem.}\\
\textbf{P3.} \textit{Multi-objective steepest descent requires the same amount of iterations as gradient descent to produce an estimate with similar worst case bounds.}\\ 

That is, combining \textbf{P1} to \textbf{P3} we find that multi-objective steepest descent requires more queries to produce one estimate  that is associated with weaker requirements than the $m$ mono-objective solutions produced by gradient descent with the same query complexity budget. One should expect quite the opposite: to produce an estimate `$\boldsymbol{\tilde{x}}$ stationary with respect to $F(\boldsymbol{x}) \equiv [f_1(\boldsymbol{x}),..., f_m(\boldsymbol{x})]^T$  (for some pre-specified tolerance)' should be less computationally demanding than to produce an estimate `$\boldsymbol{\tilde{x}}$ stationary with respect to $F(\boldsymbol{x}) \equiv [f_1(\boldsymbol{x})]^T$ (for some pre-specified tolerance)'. In fact, a trivial proof that this is the case is obtained by \textbf{P1} alone: since critical points with respect to each mono-objective function are also critical with respect to the multi-objective problem, the computational cost of gradient descent on any one objective function is an upper-bound on the cost of the task of producing `$\boldsymbol{\tilde{x}}$ stationary with respect to $F(\boldsymbol{x}) \equiv [f_1(\boldsymbol{x}),..., f_m(\boldsymbol{x})]^T$ (for some pre-specified tolerance)'.

To the best of our knowledge, and somewhat unsatisfyingly, the strategy of ``ignore all but one objective and employ an off-the-shelf mono-objective optimization algorithm'' is still more computationally efficient than the methods available in the multi-objective optimization literature when the task at hand is to produce \emph{any} multi-objective stationary point. This `trivial' approach of ignoring all but one objective, however unsatisfactory, can be seen as an instance of the well known \emph{scalarization} techniques often employed in multi-objective optimization; the most basic of which involves: (i) selecting a weight vector $\boldsymbol{\pi}\in \mathbb{R}^m_+$ such that $\pi_1+...+\pi_m = 1$; then (ii) defining a scalarized mono-objective function $F_{\boldsymbol{\pi}}(\boldsymbol{x}) \equiv \sum_{i=1}^m \pi_if_i(\boldsymbol{x})$; and finally (iii) finding a solution to the mono-objective minimization problem $\boldsymbol{\hat{x}_{\pi}} = \text{arg}\min F_{\boldsymbol{\pi}}(\boldsymbol{x})$ via gradient descent (or any other mono-objective optimization method of choice). These techniques are further discussed ahead, however, we point out that overall, scalarization choices not only (almost always) stumble on the same query complexity problems pointed above, but also, have a well documented collection of a priori shortcomings associated with the use of surrogate loss functions\footnote{Information is either lost or added when surrogate optimization strategies are employed thus placing the practitioner at risk of producing artificial non-realizable solutions, or, at risk of  producing sub-optimal solutions, or, to the very least at risk of producing an artificially narrowed down (meta-parameter dependent) set of alternatives to the decision maker; decision making alternatives that become difficult to quantify and compared to the hypothetical alternatives that would be produced by the original problem.} instead of dealing with the original problem directly. Hence, we consider two questions to address the complexity gap that arises when considering \textbf{P1} to \textbf{P3}:\\
\\
\textbf{Q1.} \textit{Is it possible to produce global convergence guarantees to stationary points of $F(\boldsymbol{x}) \equiv [f_1(\boldsymbol{x}), ..., f_m(\boldsymbol{x})]$ with a multi-objective approach that has an iteration cost no greater than that of mono-objective iterative approaches?}\\
\textbf{Q2.} \textit{Is it possible to obtain an analogous $\epsilon$-measure of proximity to critical conditions of $F(\boldsymbol{x}) \equiv [f_1(\boldsymbol{x}), ..., f_m(\boldsymbol{x})]$ with only $O(1/\epsilon^2)$ queries to gradients and function values using a multi-objective approach without arbitrary scalarization? That is, a global computational cost that is unaffected by increasing values of m?}\\

In this paper we attempt to answer both \textbf{Q1} and \textbf{Q2} separately: \textbf{Q1} is addressed in Theorem \ref{the:incremental_central_descent} and \textbf{Q2} in Theorem \ref{the:lowest_incremental_central_descent}. Both results provide positive answers to the questions above, and, to the best of our knowledge, Algorithms \ref{alg:central_basic} and \ref{alg:lowest_central_descent}, analysed in the aforementioned theorems, provide for the first time efficient first order descent direction methods for multi-objective optimization. That is, unlike the descent direction methods proposed so far within the multiobjective literature, our methods are the first to have a query complexity that is unaffected by increasing values of $m$ (a property that can substantially reduce the computational cost of the search the higher the value of $m$).

The methods we propose here are perhaps more similar to what are known as \emph{incremental gradient} methods \cite{gurbuzbalaban,bertsekas2,nedic} rather than \emph{steepest descent methods} within the mono-objective literature. Incremental gradient methods are alternatives to full gradient methods developed to reduce the computational cost of solving mono-objective minimization problems of the form $\min F_{\boldsymbol{\pi}}(\boldsymbol{x}) \equiv \sum_{i=1}^m \pi_if_i(\boldsymbol{x})$ by estimating the descent directions with only small number of gradients $\nabla f_i(\cdot)$ per iteration. When done concomitantly with randomization of the sampled gradients, then, these are some times referred to as stochastic gradient descent methods \cite{vaswani,ruder}.   Much of the literature of incremental gradient methods can be directly transposed to the multi-objective setting by considering scalarizing techniques (discussed ahead). However, as mentioned above scalarizing with a-priori weights either requires a-priori knowledge of the proper scaling of the objectives or may produce undesirable solutions associated with the use of surrogate loss functions, and, despite some analogies between the  techniques here developed and the incremental gradient strategies, our method deals with the multi-objective problem directly thus avoiding such problems all together. Furthermore, to the best of our understanding, to fully enjoy the theoretical speed-ups of incremental gradient techniques known so far,  strong convexity of the loss function, or to the very least convexity, must be assumed, whereas here we only assume Lipschitz continuity of the gradient.

\paragraph{Paper  layout} In the remainder of this section we give a brief overview of scalarization techniques employed to reduce the multi-objective problem to a mono-objective problem as well as a brief overview of incremental methods. There we point out how mono-objective incremental strategies are already equipped to tackle scalarized problems with a reduced query complexity when compared to the state-of-the-art in multi-objective optimization, albeit, with the shortcomings associated with the use of surrogate loss functions instead of tackling the multi-objective problem directly. In Section \ref{sec:direction} we define the \emph{central descent direction}, an alternative generalization of the mono-objective steepest descent direction much similar to $\boldsymbol{V}_s$ in (\ref{eq:steepest_descent}) that plays a key role in our analysis. There we provide key properties of the central descent direction  that separate it from $\boldsymbol{V}_s$ as well as descent directions derived from scalarization. In this section we also provide geometric and robustness properties of the central descent direction as well as several illustrations to facilitate the understanding of the methods developed in the following sections. The main results are provided in Section \ref{sec:incremental_descent}. In the first part of Section \ref{sec:incremental_descent} we define the \emph{incremental central descent} algorithm and provide  the most general global convergence guarantees under the assumption of non-additive vanishing step-sizes  irrespective of the starting point; producing either (i) a sub-sequence of estimates with converging critical conditions or, (ii) a sequence of function values that tends to $-\infty$. Then, in second part we provide one specific instantiation of the incremental central descent method and we prove that, for functions bounded from below, the critical conditions converge at the rate of $1/\sqrt{k}$ irrespective of the starting point. The algorithm analysed in the first part of Section  \ref{sec:incremental_descent} has an iteration cost of at most one gradient computation per iteration, and, in the second part at most two gradient computations per iteration plus a mono-objective Armijo-type line search thus  improving the current best known complexity guarantees in multi-objective optimization by eliminating the dependence on $m$.

\subsection{On scalarizing techniques and incremental approaches}

Several multi-objective alternatives to descent direction algorithms can be  found in the literature,  including multi-objective evolutionary approaches \cite{veldhuizen,zitzler},  multi-objective trust region methods \cite{ryu,villacorta,thomann} amongst others \cite{mahdavi-amiri,tsionas}. However, perhaps the most popular approach in dealing with multi-objective optimization problems is characterized by using mono-objective strategies to scalarized instances of the multi-objective problem \cite{ulivieri,ghane-kanafi,gunantara,eichfelder,kasimbeyli}. These have the immediate benefit of inheriting the convergence guarantees associated with the mono-objective optimization algorithm, and, can typically be inserted within larger iterative processes that enable the construction of a representative sample of efficient solutions of the multi-objective problem - a feature that is often the motivating factor behind adopting a multi-objective formulation in decision making processes \cite{cocchi}. The most basic of these scalarizing approaches involves: (i) selecting a weight vector $\boldsymbol{\pi}\in \mathbb{R}^m_+$ such that $\pi_1+...+\pi_m = 1$; then (ii) defining a scalarized mono-objective function $F_{\boldsymbol{\pi}}(\boldsymbol{x}) \equiv \sum_{i=1}^m \pi_if_i(\boldsymbol{x})$; and (iii) finding a solution to the mono-objective minimization problem $\boldsymbol{\hat{x}_{\pi}} = \text{arg}\min F_{\boldsymbol{\pi}}(\boldsymbol{x})$ via gradient descent (or any other mono-objective optimization method of choice). Under this approach, the computation of the direction $\boldsymbol{d}$ is typically much simpler than (\ref{eq:steepest_descent}) as the gradient of $F_{\boldsymbol{\pi}}(\cdot)$ is simply the weighted sum of the individual gradients $\nabla F_{\boldsymbol{\pi}}(\boldsymbol{x}) \equiv \sum_{i=1}^m \pi_i\nabla f_i(\boldsymbol{x})$. More sophisticated scalarizing methods include constrained optimization formulations in the form ``$\min f_i(\boldsymbol{x})$ subject to $f_j(\boldsymbol{x})\leq z_j$ for all $j\neq i$'' for varying values of $\boldsymbol{z} \in \mathbb{R}^m$ as well as unconstrained goal-programming formulations in the form $\min ||F(\boldsymbol{x}) - \boldsymbol{z}||$ for some pre-specified norm $||\cdot||$ in $\mathbb{R}^m$ and varying values of $\boldsymbol{z}$; see \cite{ulivieri} for an extensive list.

Since, scalarization employs off the shelf mono-objective methods to solve an instance of the multi-objective problem, these can also attain (trivially) the minmax $O(1/\sqrt{k})$ rate of convergence to find an efficient solution to the original problem. However, these formulations also (almost always) require $m$ gradient computations per iteration and thus they carry the same overall worst-case query complexity as the multi-objective steepest descent approach in producing a stationary point; i.e. when the dependence on $m$ is made explicit, the query complexity is more precisely of $O(m/\sqrt{k})$. For example, the goal programming approach of reducing the multi-objective problem by minimizing $g(\boldsymbol{x
})=||F(\boldsymbol{x}) - \boldsymbol{z}||_2$ still requires the computation of all $m$ gradients  to calculate a descent direction of the surrogate loss function just as the weighted sum approach of minimizing $F_{\boldsymbol{\pi}}(\boldsymbol{x}) \equiv \sum_{i=1}^m \pi_if_i(\boldsymbol{x})$. And, the constrained optimization formulations, when solved with a projected descent direction type approach will typically require more than one computation of each individual function $f_i(\cdot)$ for verifying feasibility, which in turn adds dependency on $m$ in each step even if the gradient is computed on one function alone during the iteration. Thus, to the best of our knowledge, the dependence on $m$ does not hold only under two exceptions: the first being when the scalarization parameters trivially discard objectives (such as by taking many null weights $\pi_i$ in the minimization of $F_{\boldsymbol{\pi}}(\boldsymbol{x})\equiv \sum_{i = 1}^m \pi_if_i(\boldsymbol{x})$\ ) and the second being when the mono-objective solver is capable of exploiting the inherit structure of a scalarized problem (such as with the use of incremental gradient descent approaches). 

\paragraph{Incremental gradient descent} The incremental gradient descent is a mono-objective optimization technique developed to reduce the query complexity of minimization problems with objective functions of the form $F_{\boldsymbol{\pi}}(\boldsymbol{x}) \equiv \sum_{i=1}^m \pi_if_i(\boldsymbol{x})$. These techniques, together with stochastic gradient descent methods, have served as the back-bone of practical solvers developed for extremely large and computationally expensive neural network training problems \cite{vaswani,ruder,gurbuzbalaban,bertsekas2,nedic}. The main difference between both stochastic and deterministic incremental strategies  and full gradient methods is that incremental strategies do not compute the full gradient of $F_{\boldsymbol{\pi}}(\cdot)$ in each iteration; instead, in each iteration $k$ a small sample of gradients $\nabla f_i(\boldsymbol{x}_k)$ is collected, typically of size $O(1)$, and a descent direction $\boldsymbol{d}$ is estimated/updated with both new and old information available. The choice of the sampled gradients and the method of producing $\boldsymbol{d}$ is where the main difference between different incremental methods lie.

Perhaps the first deterministic results that achieved a speed of convergence independent of $m$ are found in \cite{blatt}, where the iterations are defined as
\begin{equation}\label{eq:increment_grad}
\boldsymbol{x}^{k+1} \equiv \boldsymbol{x}^k - \frac{\alpha}{L} \sum_{l = 0}^{L-1} \nabla f_{(k-l)_L}(\boldsymbol{x}^{k-l}); \end{equation}
for some sufficiently small constant $\alpha\in \mathbb{R}_+$ and some $L\in \mathbb{N}$. There they find that for quadratic functions an estimate $\hat{\boldsymbol{x}}$ that satisfies $||F_{\boldsymbol{\pi}}(\hat{\boldsymbol{x}})-F_{\boldsymbol{\pi}}(\boldsymbol{x}^*)||\leq \epsilon$ can be produced in less than $O(log(1/\epsilon))$ iterations. Since this seminal result, variations of (\ref{eq:increment_grad}) have been produced that extended these guarantees for general classes of Lipschitz continuous and strongly convex functions \cite{liu,vanli,gurbuzbalaban2,tseng,mokhtari}; achieving guarantees that are even tighter than full gradient descent methods \cite{mokhtari}.

The application of incremental descent methods to \emph{scalarized} versions of the multi-objective problem are fairly straight forward: it suffices to (i) choose weights; (ii) reduce the multi-objective problem to a mono-objective problem; and, (iii) employ an off-the-shelf incremental solver to the scalarized problem. And, given that the objective functions satisfy the convexity requirements of  \cite{liu,vanli,gurbuzbalaban2,tseng,mokhtari}, then a reduced computational complexity that does not depend on $m$ can be enjoyed. However, as mentioned above, all the drawbacks associated with the reduction of a multi-objective to a mono-objective problem are also to be expected. If such a reduction to a mono-objective problem were known a-priori, then there would be no point in adopting a multi-objective formulation in the first place; multi-objective formulations are typically chosen precisely because no such satisfiable reductionistic approach is known. Furthermore, the convexity assumptions necessary for guaranteed improvements on the convergence speed can be hard to ensure when multiple objectives (typically of very different natures) are considered. Unlike neural network training problems where each $f_i$ is typically the loss functions measured on one training point, in multi-objective optimization each $f_i$ typically a measures objectives that are impossible to put on one common scale: one may represent cost of production, the other safety protocol measures, and the last variance in quality control etc.

\section{The central descent direction} \label{sec:direction}

For non-critical points $\boldsymbol{x}$, any $\boldsymbol{v}$ that satisfies $\nabla f_i(\boldsymbol{x})^T \boldsymbol{v}<0$ for all $i = 1,..., m$ is called a descent direction.  In this paper one specific uniquely defined descent direction, which we call the \emph{central descent direction}, is of interest. The central descent direction is denoted by  $\boldsymbol{V}_c(\boldsymbol{x})$ and is defined as:
\begin{equation}\label{eq:central_descent}
\boldsymbol{V}_c(\boldsymbol{x})\equiv \left\{\begin{array}{ll}
\text{argmin } & \tfrac{1}{2}||\boldsymbol{V}||^2 \\
\text{st. } & \nabla f_i(\boldsymbol{x})^T \boldsymbol{V} \leq -||\nabla f_i(\boldsymbol{x})|| \text{ for all i = 1, ..., m}\end{array} \right.
\end{equation}
Problem (\ref{eq:central_descent}) can only admit a solution if $\boldsymbol{x}$ is non-critical or if at least one of the gradients $\nabla f_i(\boldsymbol{x})$ is null. Otherwise, critical points $\boldsymbol{x}$ with non-null gradients $\nabla f_i(\boldsymbol{x})$ do not admit a solution. 
Hence, the conditions for a point in $\mathbb{R}^n$ to be critical can be restated in terms of the central descent direction: 
\begin{lemma} 
A point $\boldsymbol{x}\in \mathbb{R}^n$ is critical if and only if (i) at least one gradient $\nabla f_i(\boldsymbol{x})$ is null for some $i= 1, ..., m$ or (ii) the central descent direction $\boldsymbol{V}_c(\boldsymbol{x})$ defined in (\ref{eq:central_descent}) is empty or (iii) both.
\end{lemma}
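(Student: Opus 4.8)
The plan is to prove the logically equivalent \emph{negated} statement: that $\boldsymbol{x}$ is non-critical if and only if every gradient $\nabla f_i(\boldsymbol{x})$ is non-null \emph{and} problem (\ref{eq:central_descent}) admits a feasible point. Once this contrapositive equivalence is in hand, applying De Morgan's law yields the claim directly: the negation of ``all gradients non-null and the feasible set non-empty'' is ``at least one gradient is null, or the feasible set is empty'', and since clause (iii) is merely the conjunction of (i) and (ii), the three-way disjunction collapses to exactly that. So establishing the single clean equivalence suffices, and I would open the proof with this reduction precisely to avoid having to reason about the somewhat awkward three-clause statement directly.

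For the forward implication, I would assume $\boldsymbol{x}$ is non-critical, so by definition there is a direction $\boldsymbol{v}$ with $\nabla f_i(\boldsymbol{x})^T\boldsymbol{v}<0$ for every $i$. First I would observe that each gradient must then be non-null: if $\nabla f_j(\boldsymbol{x})=0$ we would have $\nabla f_j(\boldsymbol{x})^T\boldsymbol{v}=0$, contradicting the strict inequality. Next I would rescale $\boldsymbol{v}$ into the feasible region of (\ref{eq:central_descent}) by setting $c \equiv \max_{i} \frac{||\nabla f_i(\boldsymbol{x})||}{-\nabla f_i(\boldsymbol{x})^T\boldsymbol{v}}$, which is a finite positive number because every numerator is positive (gradients non-null) and every denominator is strictly positive. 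With this choice, $\boldsymbol{V}=c\boldsymbol{v}$ satisfies $\nabla f_i(\boldsymbol{x})^T\boldsymbol{V} = c\,\nabla f_i(\boldsymbol{x})^T\boldsymbol{v} \le -||\nabla f_i(\boldsymbol{x})||$ for all $i$, the inequality holding precisely because $c$ was taken as a maximum over $i$. Hence the feasible set is non-empty.

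For the reverse implication, I would assume all gradients are non-null and that some $\boldsymbol{V}$ is feasible. Since $||\nabla f_i(\boldsymbol{x})||>0$ for each $i$, the feasibility constraint gives $\nabla f_i(\boldsymbol{x})^T\boldsymbol{V} \le -||\nabla f_i(\boldsymbol{x})|| < 0$ for every $i$, so $\boldsymbol{V}$ is itself a descent direction and $\boldsymbol{x}$ is non-critical. This closes the equivalence.

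I do not expect a genuine obstacle here; the content is elementary. The only places that warrant care are the bookkeeping of the null-gradient edge cases (a null gradient simultaneously forces criticality and forbids any strictly descending direction, which is exactly what makes the strict-versus-non-strict inequality in (\ref{eq:central_descent}) consistent) and the verification that the scaling constant $c$ is well-defined and finite, both of which rest on the non-nullness of the gradients. The main structural move, and the one I would emphasise, is the opening De Morgan reduction, since it is what converts the three-clause disjunction into a single transparent ``non-null and feasible'' characterisation.
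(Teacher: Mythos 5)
Your proof is correct, and it is simply the fully written-out version of what the paper leaves as ``follows immediately from the definition'': the null-gradient observation plus the rescaling $c=\max_i ||\nabla f_i(\boldsymbol{x})||/(-\nabla f_i(\boldsymbol{x})^T\boldsymbol{v})$ is the natural (essentially only) way to pass between strict descent directions and feasibility of (\ref{eq:central_descent}). No gaps.
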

\begin{proof}
Follows immediately from the definition of critical points.
\end{proof}
Furthermore, the central descent direction provides a means of ``measuring'' the proximity to critical conditions (as discussed bellow). To see this we will need the following lemma:
\begin{lemma}\label{lem:diverge}
Let $\{\boldsymbol{x}_k\}_{k = 1,...,\infty}$ be a sequence of non-critical points converging to $\boldsymbol{x}_{\infty}\in \mathbb{R}^n$. Then, if neither the gradients $\nabla f_i(\boldsymbol{x}_k)$ goes to zero with $k\to \infty$, we have that either (ii) the accumulation point $\boldsymbol{x}_{\infty}$ is critical and  $||\boldsymbol{V}_c(\boldsymbol{x}_k)|| \to \infty$; or (ii) the accumulation point  $\boldsymbol{x}_{\infty}$ is not critical and $\boldsymbol{V}_c(\boldsymbol{x}_k)$ converges to $\boldsymbol{V}_c(\boldsymbol{x}_{\infty})$.
\end{lemma}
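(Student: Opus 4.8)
The plan is to analyze the constraint set of the optimization problem (\ref{eq:central_descent}) and track how it deforms as $\boldsymbol{x}_k \to \boldsymbol{x}_\infty$. First I would set up notation: write the feasible region as $\mathcal{C}(\boldsymbol{x}) \equiv \{\boldsymbol{V} : \nabla f_i(\boldsymbol{x})^T \boldsymbol{V} \leq -\|\nabla f_i(\boldsymbol{x})\| \text{ for all } i\}$, so that $\boldsymbol{V}_c(\boldsymbol{x})$ is the projection of the origin onto $\mathcal{C}(\boldsymbol{x})$ whenever this set is nonempty. Since the gradients are assumed not to vanish along the sequence (and hence at the limit, by continuity of $\nabla f_i$), the right-hand sides $-\|\nabla f_i(\boldsymbol{x}_k)\|$ stay bounded away from zero, and the normals $\nabla f_i(\boldsymbol{x}_k)$ converge to $\nabla f_i(\boldsymbol{x}_\infty)$. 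This continuity of the problem data is what drives both cases.

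For case (ii), when $\boldsymbol{x}_\infty$ is non-critical, the limiting set $\mathcal{C}(\boldsymbol{x}_\infty)$ has nonempty interior (a strict feasible point exists precisely because non-criticality supplies a $\boldsymbol{v}$ with all $\nabla f_i(\boldsymbol{x}_\infty)^T \boldsymbol{v} < 0$, which can be scaled to satisfy the constraints strictly). I would then invoke a stability/continuity argument for the projection onto a polyhedron whose defining data varies continuously: under a Slater-type condition the minimizer of a strictly convex quadratic over a continuously-varying polyhedron is continuous in the data, giving $\boldsymbol{V}_c(\boldsymbol{x}_k) \to \boldsymbol{V}_c(\boldsymbol{x}_\infty)$. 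The cleanest route is to show that any accumulation point of $\{\boldsymbol{V}_c(\boldsymbol{x}_k)\}$ must be feasible for $\mathcal{C}(\boldsymbol{x}_\infty)$ (by passing the constraints to the limit) and must have norm no larger than $\|\boldsymbol{V}_c(\boldsymbol{x}_\infty)\|$ (by comparing against a recovery sequence built from the strict feasible point), hence by uniqueness of the projection it equals $\boldsymbol{V}_c(\boldsymbol{x}_\infty)$; a preliminary boundedness estimate is needed to guarantee accumulation points exist.

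For case (i), the harder direction, suppose $\boldsymbol{x}_\infty$ is critical while each $\boldsymbol{x}_k$ is non-critical, so $\mathcal{C}(\boldsymbol{x}_k) \neq \emptyset$ but $\mathcal{C}(\boldsymbol{x}_\infty) = \emptyset$ (the gradients at $\boldsymbol{x}_\infty$ being nonzero, criticality forces infeasibility by Lemma~1). I would argue by contradiction: if $\|\boldsymbol{V}_c(\boldsymbol{x}_k)\|$ did not diverge, it would have a bounded subsequence with some accumulation point $\boldsymbol{V}^*$. Passing the constraints $\nabla f_i(\boldsymbol{x}_k)^T \boldsymbol{V}_c(\boldsymbol{x}_k) \leq -\|\nabla f_i(\boldsymbol{x}_k)\|$ to the limit along this subsequence, using the convergence of gradients and of $\boldsymbol{V}_c(\boldsymbol{x}_k)$, would produce a point $\boldsymbol{V}^*$ satisfying $\nabla f_i(\boldsymbol{x}_\infty)^T \boldsymbol{V}^* \leq -\|\nabla f_i(\boldsymbol{x}_\infty)\| < 0$ for all $i$, i.e. a feasible point of $\mathcal{C}(\boldsymbol{x}_\infty)$, contradicting $\mathcal{C}(\boldsymbol{x}_\infty) = \emptyset$. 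Hence no bounded subsequence exists and $\|\boldsymbol{V}_c(\boldsymbol{x}_k)\| \to \infty$.

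The main obstacle I anticipate is the continuity argument in case (ii): projection onto a moving polyhedron is only lower-semicontinuous in general, and upper-semicontinuity (hence genuine convergence) requires the Slater/interior condition to prevent the feasible set from collapsing or having its active-constraint structure jump discontinuously. I would need to handle this carefully, most likely by explicitly constructing, from a strict feasible point of $\mathcal{C}(\boldsymbol{x}_\infty)$, a nearby feasible point of each $\mathcal{C}(\boldsymbol{x}_k)$ for large $k$ (exploiting that strict inequalities persist under small perturbations of the data), which both supplies the boundedness estimate and furnishes the recovery sequence needed to pin down the limit. I also note a minor issue with the statement as written (the second alternative is labeled ``(ii)'' rather than ``(i)''), which I would correct to ``(i)/(ii)'' in the writeup.
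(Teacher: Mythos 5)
Your proposal is correct and follows essentially the same route as the paper: both arguments rest on the stability of the constraint polyhedron as the gradient data converges, using a strictly feasible (Slater) point --- obtained by scaling, exactly as the paper's $(1+\delta)\boldsymbol{V}_c(\boldsymbol{x}_{\infty})$ --- to get boundedness and the upper bound in the non-critical case, and concluding via uniqueness of the minimum-norm feasible point; the critical case is the corresponding infeasibility/divergence argument. The only difference is organizational: the paper routes all perturbation claims through the quantitative Lemma~\ref{lem:lower}, whereas you pass the constraints directly to the limit along convergent subsequences, which also lets you skip the paper's separate $\liminf$ bound on $||\boldsymbol{V}_c(\boldsymbol{x}_k)||$, since feasibility of any accumulation point together with your upper bound already forces it to coincide with $\boldsymbol{V}_c(\boldsymbol{x}_{\infty})$.
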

\begin{proof}
This proof will use make use of continuity-type results on the dependence of the central descent direction with respect to the gradients, described in Lemma \ref{lem:lower} whose proof is found in the appendix Section \ref{sec:aux_lem1}.

\begin{lemma}\label{lem:lower}
Given a collection of vectors $\boldsymbol{g}_{i}$ for $i = 1, ...,m$  define $\mathcal{S} \equiv \{\boldsymbol{v} \text{ st. } \boldsymbol{g}_{i}^T \boldsymbol{v} \leq -||\boldsymbol{g}_{i}|| \text{ for all i = 1, ...,m } \}$. Then:
\begin{enumerate}
\item If for some $R>0$ the intersection of $\{ \boldsymbol{v}\text{ st. } ||\boldsymbol{v}||\leq R\}$ with $\mathcal{S}$ is empty, then, there exists an $\epsilon>0$ such that for every collection of $\hat{\boldsymbol{g}}_i$'s such that $||\hat{\boldsymbol{g}}_i - \boldsymbol{g}_i||\leq \epsilon$ for all $i = 1,...,m$ the intersection of $\{ \boldsymbol{v}\text{ st. } ||\boldsymbol{v}||\leq R\}$ with $\{\boldsymbol{v} | \hat{\boldsymbol{g}}_{i}^T \boldsymbol{v} \leq -||\hat{\boldsymbol{g}}_{i}|| \text{ for all i = 1, ...,m } \}$ is also empty.
\item If $\boldsymbol{v}_I$ is vector in $\mathbb{R}^n$ that satisfies $ \boldsymbol{g}_{i}^T \boldsymbol{v}_I < -||\boldsymbol{g}_{i}|| \text{ for all i = 1, ...,m }$, then, there exists an $\epsilon>0$ such that for every collection of $\hat{\boldsymbol{g}}_i$'s in $\mathbb{R}^n$ such that $||\hat{\boldsymbol{g}}_i - \boldsymbol{g}_i||\leq \epsilon$ for all $i = 1,...,m$ the vector $\boldsymbol{v}_I$ also satisfies $\hat{\boldsymbol{g}}_{i}^T \boldsymbol{v}_I < -||\hat{\boldsymbol{g}}_{i}|| \text{ for all i = 1, ...,m }$.
\item If $\boldsymbol{v}_E$ is a vector in $\mathbb{R}^n$ that satisfies $ \boldsymbol{g}_{i}^T \boldsymbol{v}_E > -||\boldsymbol{g}_{i}|| \text{ for some i = 1, ...,m }$, then, there exists an $\epsilon>0$ such that for every collection of $\hat{\boldsymbol{g}}_i$'s in $\mathbb{R}^n$ such that $||\boldsymbol{g}_i - \hat{\boldsymbol{g}}_i||\leq \epsilon$ for all $i = 1,...,m$  the vector $\boldsymbol{v}_E$ also satisfies $\hat{\boldsymbol{g}}_{i}^T \boldsymbol{v}_E > -||\hat{\boldsymbol{g}}_{i}|| \text{ for some i = 1, ...,m }$.
\end{enumerate}
\end{lemma}
\begin{proof}
Refer to the Appendix Section \ref{sec:aux_lem1}.
\end{proof}

Given that the gradients are L-Lipschitz continuous, we must have that $||\nabla f_i(\boldsymbol{x}_k) - \nabla f_i(\boldsymbol{x}_{\infty})|| \to 0$ as $k\to \infty$. Let us assume that $\boldsymbol{x}_{\infty}$ is critical with $\nabla f_i(\boldsymbol{x}_{\infty}) \neq 0$ for all i = 1, ..., m. 
Under these conditions the feasible set  $\mathcal{S}_{\infty} \equiv \{\boldsymbol{v} \text{ st. } \nabla f_i(\boldsymbol{x}_{\infty})^T \boldsymbol{v} \leq - ||\nabla f_i(\boldsymbol{x}_{\infty})|| \text{ for all i = 1, ..., m } \}$ is empty, and thus for any given $R>0$ the intersection between $\{\boldsymbol{v} \text{ st. }||\boldsymbol{v}||\leq R\}$ and $\mathcal{S}_{\infty}$ is empty. Hence, as a consequence of Lemma \ref{lem:lower} part 1 we have: for any $R>0$ there will exist an $\epsilon>0$ such that any collection of $\hat{\boldsymbol{g}}_i$'s in which $||\hat{\boldsymbol{g}}_i-\nabla f_i (\boldsymbol{x}_{\infty})||\leq \epsilon$ for all $i = 1,...,m$ and any $\boldsymbol{v}$  which satisfies $||\boldsymbol{v}||\leq R$ we have that the intersection between $\{\boldsymbol{v} \text{ st. }||\boldsymbol{v}||\leq R\}$ and $\{\boldsymbol{v} \text{ st. }\hat{\boldsymbol{g}}_i^T\boldsymbol{v}\leq -||\hat{\boldsymbol{g}}_i ||\text{ for all i = 1,...,m }\}$ is also empty. As a consequence of this fact,  as $\nabla f_i(\boldsymbol{x}_{k})$ approaches $\nabla f_i(\boldsymbol{x}_{\infty})$, the value of $||\boldsymbol{V}_c(\boldsymbol{x}_k)||$ must arbitrarily increase.

Now, let us assume that $\boldsymbol{x}_{\infty}$ is non-critical, and hence, $\boldsymbol{V}_c(\boldsymbol{x}_{\infty})$ is non empty. We recall the definition of the feasible set $\mathcal{S}_{\infty}$ as $\mathcal{S}_{\infty} \equiv \{\boldsymbol{v} \text{ st. } \nabla f_i(\boldsymbol{x}_{\infty})^T \boldsymbol{v} \leq - ||\nabla f_i(\boldsymbol{x}_{\infty})|| \text{ for all i = 1, ..., m } \}$, and, we will refer to the \emph{interior} of $\mathcal{S}_{\infty}$ as the set of vectors that satisfy all the inequalities strictly, i.e. the interior of $\mathcal{S}_{\infty}$ is the set $\{\boldsymbol{v} \text{ st. } \nabla f_i(\boldsymbol{x}_{\infty})^T \boldsymbol{v} < - ||\nabla f_i(\boldsymbol{x}_{\infty})|| \text{ for all i = 1, ..., m } \}$. By considering the vector $\boldsymbol{v}_{\delta}\equiv (1+\delta) \boldsymbol{V}_c(\boldsymbol{x}_{\infty})$ for any $\delta>0$ it is easy to see that  $\boldsymbol{v}_{\delta}$ is in the interior of $\mathcal{S}_{\infty}$. As a consequence of Lemma \ref{lem:lower} part 2, for every $\boldsymbol{v}_I$ in the interior of $\mathcal{S}_{\infty}$ there exists an $\epsilon>0$ such that for every collection of $\hat{\boldsymbol{g}}_i$'s for $i=1,...,m$ that satisfy $||\hat{\boldsymbol{g}}_i - \nabla f_i(\boldsymbol{x}_{\infty})|| \leq \epsilon$ we will have $\boldsymbol{v}_I$ also in the interior of $\{\boldsymbol{v} \text{ st. } \hat{\boldsymbol{g}}_i^T \boldsymbol{v} \leq -||\hat{\boldsymbol{g}}_i|| \text{ for all i = 1, ..., m }\}$. The same can be said about points in the exterior of $\mathcal{S}_{\infty}$ by applying Lemma \ref{lem:lower} part 3, where $\boldsymbol{v}_E$ is said to be in the exterior of $\mathcal{S}_{\infty}$ if there exists an $i$ between $1$ and $m$ such that $\nabla f_i(\boldsymbol{x}_{\infty})^T \boldsymbol{v}_E>-||\nabla f_i(\boldsymbol{x}_{\infty})||$. Hence, if any subsequence of $\boldsymbol{V}_k$'s converges, then, since it cannot converge to the interior of $\mathcal{S}_{\infty}$ nor the exterior, it must converge to the subset of $\mathcal{S}_{\infty}$ where at least one of the constraints $\nabla f_i(\boldsymbol{x}_{\infty})^T\boldsymbol{v} \leq -||\nabla f_i(\boldsymbol{x}_{\infty})||$ is satisfied with an equality.

Now, notice that every point in the interior of $\mathcal{S}_k\equiv \{\boldsymbol{v} \text{ st. }  \nabla f_i(\boldsymbol{x}_{k})^T \boldsymbol{v} < - ||\nabla f_i(\boldsymbol{x}_{k})|| \text{ for all i = 1, ..., m } \}$ provides an upper-bound on the value of $||\boldsymbol{V}_c(\boldsymbol{x}_k)||$ since $\boldsymbol{V}_c(\boldsymbol{x}_k)$ is the minimizer of $||\boldsymbol{v}||$ over $\mathcal{S}_k$. One such upper-bound is sufficient to recognize that $\boldsymbol{V}_c(\boldsymbol{x}_k)$ remains bounded for $k>\bar{k}$ for some $\bar{k}\in \mathbb{N}$. Thus, since bounded sequences must have at least one converging sub-sequence, all that is left is to show that every such sub-sequence converges to $\boldsymbol{V}_c(\boldsymbol{x}_{\infty})$. 

Since every point in the interior of $\mathcal{S}_k$ provides an upper-bound on the value of $||\boldsymbol{V}_c(\boldsymbol{x}_k)||$ we know that $\lim_{k\to \infty}||\boldsymbol{V}_c(\boldsymbol{x}_k)|| \leq \lim_{\delta \to 0}||\boldsymbol{v}_{\delta}||=  ||\boldsymbol{V}_c(\boldsymbol{x}_{\infty})||$. Now, to see that $\lim_{k\to \infty}||\boldsymbol{V}_c(\boldsymbol{x}_k)||$ also satisfies $\lim_{k\to \infty}||\boldsymbol{V}_c(\boldsymbol{x}_k)|| \geq  ||\boldsymbol{V}_c(\boldsymbol{x}_{\infty})||$ notice that if $\{\boldsymbol{v}\text{ st. }||\boldsymbol{v}||\leq R\}\cap\mathcal{S}_{\infty}$ is empty for some $R>0$, then, by Lemma \ref{lem:lower} part 1 we have that for $\epsilon>0$ sufficiently small, any collection of $\hat{\boldsymbol{g}}_i$'s that satisfy $||\hat{\boldsymbol{g}}_i-\nabla f_i(\boldsymbol{x}_{\infty})||\leq \epsilon$ will also have an empty intersection between $\{\boldsymbol{v}\text{ st. }||\boldsymbol{v}||\leq R\}$ and $\{ \boldsymbol{v} \text{ st. } \hat{\boldsymbol{g}}_i^T\boldsymbol{v} \leq -||\hat{\boldsymbol{g}}_i \text{ for all i = 1, ..., m}|| \}$. This is the case for any $R<||\boldsymbol{V}_c(\boldsymbol{x}_{\infty})||$, and thus for any $R<||\boldsymbol{V}_c(\boldsymbol{x}_{\infty})||$ and for any $k\geq \bar{k}$ for some sufficiently large $\bar{k}\in \mathbb{N}$ the value of $||\boldsymbol{V}_c(\boldsymbol{x}_{\infty})||$ is greater than or equal to $R$. Taking the limit of $R$ to $||\boldsymbol{V}_c(\boldsymbol{x}_{\infty})||$ we obtain that $\lim_{k\to\infty}||\boldsymbol{V}_c(\boldsymbol{x}_{k})|| \geq ||\boldsymbol{V}_c(\boldsymbol{x}_{\infty})||$.

Defining $\boldsymbol{v}_{\text{lim}} \equiv \lim_{k\to \infty} \boldsymbol{V}_c(\boldsymbol{x}_k)$,  since we have established that  $||\boldsymbol{v}_{\text{lim}}|| = ||\boldsymbol{V}_c(\boldsymbol{x}_{\infty})||$ while satisfying $\nabla f_i (\boldsymbol{x}_{\infty})^T\boldsymbol{v}_{\text{lim}}\leq -||\nabla f_i (\boldsymbol{x}_{\infty})||$  for all i = 1, ..., m (because it cannot be in the interior or the exterior of $\mathcal{S}_{\infty}$); and, since $\boldsymbol{V}_c(\boldsymbol{x}_{\infty})$ is the unique minimizer of $||\boldsymbol{v}||$ over $\mathcal{S}_{\infty}$, we must conclude that $\lim_{k\to \infty}\boldsymbol{V}_c(\boldsymbol{x}_k) = \boldsymbol{V}_c(\boldsymbol{x}_{\infty})$.  
\end{proof}

Thus, Lemma \ref{lem:diverge} gives us a measure of proximity to critical conditions of multi-objective optimization problems in terms of the central descent direction. In mono-objective optimization, convergence to critical conditions are typically measured in terms of the norm of the gradient; that is, non-critical points with small values of $||\nabla f(\boldsymbol{x})||$ are interpreted as ``near critical''. From Lemma \ref{lem:diverge}, in the multi-objective setting we can analogously interpret non-critical points as ``near critical'' if either for some $i=1,...,m$ the value of $||\nabla f_i(\boldsymbol{x})||$ is small, or, if the value of $\boldsymbol{V}_c(\boldsymbol{x})$ is large. In Figure \ref{fig:EfficientSet}  we illustrate a multi-objective optimization problem and the regions that approximate the efficient set with the measures here delineated.

\begin{figure}
  \includegraphics[width=\linewidth]{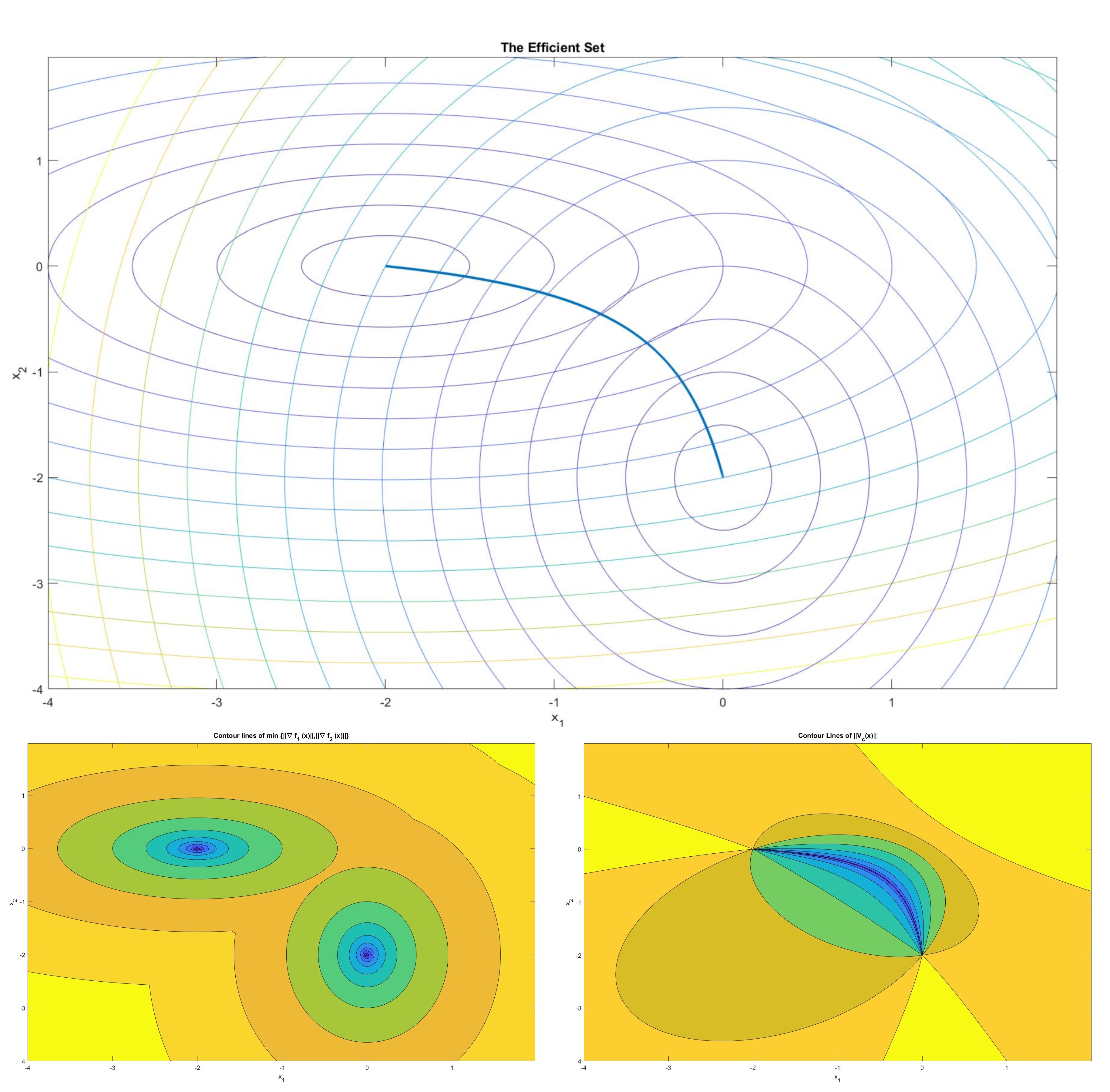}
  \caption{ \label{fig:EfficientSet}The top figure depicts the level curves and the efficient set of the multi-objective optimization problem with objective functions $f_1(\boldsymbol{x}) \equiv (x_1+2)^2 + 3x_2^2$ and $f_2(\boldsymbol{x}) \equiv 3x_1^2 + (x_2+2)^2$. The bottom left depicts the level curves of $\min\{||\nabla f_1(\boldsymbol{x})||;||\nabla f_1(\boldsymbol{x})||\}$ and the bottom right depicts the level curves of $||\boldsymbol{V}_c(\boldsymbol{x})||$. Both measures are complementary to quantify proximity to critical conditions; the  $\min\{||\nabla f_1(\boldsymbol{x})||;||\nabla f_1(\boldsymbol{x})||\}$ measures proximity to local minima of the mono objective sub-problems, and, $||\boldsymbol{V}_c(\boldsymbol{x})||$ measures proximity to intermediate solutions of the multi-objective problem.}
\end{figure}

\paragraph{On the geometric properties and robustness of $\boldsymbol{V}_c$} The central descent direction in addition to providing a measure of proximity to critical conditions that is complimentary to the measure used in mono-objective optimization, it also enjoys of several geometric and robustness properties. Here we highlight a few; the first is that 

\begin{proposition} \label{prop:distance_max}
The descent direction $\boldsymbol{V}_c(\boldsymbol{x})/||\boldsymbol{V}_c(\boldsymbol{x})||$ is the unit vector maximally distant from non-descent directions $\mathbb{R}^n\setminus \{\boldsymbol{v} \text{ st. } \nabla f_i (\boldsymbol{x})^T\boldsymbol{v}\leq 0 \text{ for i = 1,...,m}\}$.
\end{proposition}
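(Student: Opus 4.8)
The plan is to recast the geometric claim as an explicit max--min program and to recognise that program as the ``reciprocal'' of the minimum-norm program that defines $\boldsymbol{V}_c$. Throughout write $\boldsymbol{g}_i \equiv \nabla f_i(\boldsymbol{x})$ and assume $\boldsymbol{x}$ is non-critical with all $\boldsymbol{g}_i \neq 0$ (otherwise $\boldsymbol{V}_c(\boldsymbol{x})/\|\boldsymbol{V}_c(\boldsymbol{x})\|$ is undefined and the statement is vacuous). Let $D \equiv \{\boldsymbol{v} : \boldsymbol{g}_i^T \boldsymbol{v} \leq 0 \text{ for all } i\}$ be the closed cone of descent directions, so that the non-descent set is the open set $\mathbb{R}^n \setminus D = \bigcup_{i} \{\boldsymbol{v} : \boldsymbol{g}_i^T \boldsymbol{v} > 0\}$.

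The first step is to compute the distance from a unit vector to the non-descent set. For a unit vector $\boldsymbol{u}$ in the interior of $D$ (that is, $\boldsymbol{g}_i^T\boldsymbol{u} < 0$ for all $i$), I would argue that the distance to $\mathbb{R}^n\setminus D$ equals the minimum over $i$ of the distances to the closed halfspaces $\{\boldsymbol{v} : \boldsymbol{g}_i^T\boldsymbol{v} \geq 0\}$, since the non-descent set is their union. Because $\boldsymbol{u}$ lies in the open complement of each such halfspace, its distance to $\{\boldsymbol{v}:\boldsymbol{g}_i^T\boldsymbol{v}\geq 0\}$ is the elementary point-to-hyperplane distance $-\boldsymbol{g}_i^T\boldsymbol{u}/\|\boldsymbol{g}_i\|$, which yields
\[
\mathrm{dist}(\boldsymbol{u}, \mathbb{R}^n\setminus D) = \min_{i} \frac{-\boldsymbol{g}_i^T\boldsymbol{u}}{\|\boldsymbol{g}_i\|}.
\]
Unit vectors outside the interior of $D$ have distance zero and may be discarded, so maximising the distance over unit vectors becomes the max--min program
\[
t^* \equiv \max_{\|\boldsymbol{u}\|=1} \ \min_{i} \Big(-\tfrac{\boldsymbol{g}_i}{\|\boldsymbol{g}_i\|}\Big)^T \boldsymbol{u}.
\]

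The second step relates $t^*$ and its maximiser to $\boldsymbol{V}_c$ by a scaling argument. Writing $\rho \equiv \|\boldsymbol{V}_c(\boldsymbol{x})\|$, the constraints in (\ref{eq:central_descent}) read $(\boldsymbol{g}_i/\|\boldsymbol{g}_i\|)^T \boldsymbol{V}_c \leq -1$, so the unit vector $\boldsymbol{V}_c/\rho$ is feasible for the max--min program with objective value $1/\rho$, giving $t^* \geq 1/\rho$. Conversely, if a unit vector $\boldsymbol{u}$ attains value $t>0$, then $(\boldsymbol{g}_i/\|\boldsymbol{g}_i\|)^T(\boldsymbol{u}/t) \leq -1$ for all $i$, so $\boldsymbol{u}/t$ is feasible for the minimum-norm program defining $\boldsymbol{V}_c$; hence $\rho \leq \|\boldsymbol{u}/t\| = 1/t$, i.e. $t \leq 1/\rho$. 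Together these give $t^* = 1/\rho$. For uniqueness, any maximiser $\boldsymbol{u}^*$ satisfies $\|\boldsymbol{u}^*/t^*\| = 1/t^* = \rho$ and is feasible for (\ref{eq:central_descent}); since the feasible set of (\ref{eq:central_descent}) is convex and the objective $\tfrac{1}{2}\|\cdot\|^2$ strictly convex, its minimum-norm point $\boldsymbol{V}_c$ is the unique feasible vector of norm $\rho$, forcing $\boldsymbol{u}^*/t^* = \boldsymbol{V}_c$ and therefore $\boldsymbol{u}^* = t^*\boldsymbol{V}_c = \boldsymbol{V}_c/\rho = \boldsymbol{V}_c/\|\boldsymbol{V}_c\|$.

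The step I expect to be the main obstacle is the first one: rigorously justifying the distance formula. One must verify that the nearest non-descent point is obtained by orthogonally projecting $\boldsymbol{u}$ onto one of the hyperplanes $\{\boldsymbol{g}_i^T\boldsymbol{v}=0\}$ -- in particular that the distance to the open union equals the minimum of the distances to the individual closed halfspaces, and that the relevant projection is onto a hyperplane (rather than onto the boundary of the cone $D$, which may be a lower-dimensional face), since the non-descent set contains the whole halfspace. The remaining steps are a routine reciprocal scaling argument leaning only on the uniqueness of the minimum-norm point of the convex feasible set.
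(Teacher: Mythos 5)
Your proof is correct and follows essentially the same route as the paper's: both reduce the claim to the distance formula $d(\boldsymbol{u})=\min_i\bigl(-\nabla f_i(\boldsymbol{x})^T\boldsymbol{u}/\|\nabla f_i(\boldsymbol{x})\|\bigr)$ for unit descent vectors and then identify the resulting max--min program with the minimum-norm program (\ref{eq:central_descent}) via the reciprocal scaling $\boldsymbol{V}=\boldsymbol{u}/t$. The paper performs this identification as a direct change of variables in the optimization program, whereas you argue it via a two-sided inequality plus uniqueness of the minimum-norm point; your version is, if anything, slightly more careful about the signs and about why the distance to the union of half-spaces is the minimum of the individual point-to-hyperplane distances.
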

\begin{proof}
Given a unit vector $\boldsymbol{u}$ in $\{\boldsymbol{v} \text{ st. } \nabla f_i (\boldsymbol{x})^T\boldsymbol{v}\leq 0 \text{ for i = 1,...,m}\}$, the $L_2$ distance of $\boldsymbol{u}$ to $\mathbb{R}^n\setminus \{\boldsymbol{v} \text{ st. } \nabla f_i (\boldsymbol{x})^T\boldsymbol{v}\leq 0 \text{ for i = 1,...,m}\}$ is given by  $d(\boldsymbol{u}) = \min_{i = 1,...,m} \nabla f_i (\boldsymbol{x})^T\boldsymbol{u} / ||\nabla f_i (\boldsymbol{x})||$. Therefore, the maximization problem that defines the unit vector maximally distant from non-descent directions can be expressed as
$$\begin{array}{ll}
 \max_{z,\boldsymbol{u}} & z \\
 \text{st.} & z ||\nabla f_i (\boldsymbol{x})||\leq \nabla f_i(\boldsymbol{x})^T \boldsymbol{u} \text{ for } i = 1,...,m; \\
 & ||\boldsymbol{u}|| = 1;
 \end{array} $$ 
 which, by defining $\boldsymbol{V} \equiv \boldsymbol{u}/-z $ the objective $\max z$ turns out to be equivalent to $\min ||\boldsymbol{V}||_2$, and, the constraint $z ||\nabla f_i (\boldsymbol{x})||\leq \nabla f_i(\boldsymbol{x})^T \boldsymbol{u} \text{ for } i = 1,...,m;$ turns out to be equivalent to $- ||\nabla f_i (\boldsymbol{x})||\geq \nabla f_i(\boldsymbol{x})^T \boldsymbol{V} \text{ for } i = 1,...,m$. The remaining constraint of $||\boldsymbol{u}|| = 1$  can be dropped by recognizing that $|z| =1/||\boldsymbol{V} ||$ is non-restrictive on the remaining variables. 
\end{proof}

Proposition \ref{prop:distance_max} ensures that the central descent direction is maximally distant from non-descent directions as measured by the $L_2$ norm. As a consequence, numerical errors or approximations in the calculation of $\boldsymbol{V}_c/||\boldsymbol{V}_c||$ are less likely to produce a non-descent direction when compared to any other descent directions in the negative cone of the gradients. A similar claim can be done if only orthogonal perturbations to the descent direction are considered, that is, the direction $\boldsymbol{V}_c/||\boldsymbol{V}_c||$ also happens to be maximally distant from non-descent directions if the distance of $\boldsymbol{u}$ in $\{\boldsymbol{v} \text{ st. } \nabla f_i (\boldsymbol{x})^T \boldsymbol{v}\leq 0 \text{ for i = 1, ..., m} \}$ to non descent directions $\mathbb{R}^n\setminus \{\boldsymbol{v} \text{ st. } \nabla f_i (\boldsymbol{x})^T\boldsymbol{v}\leq 0 \text{ for i = 1,...,m}\}$ is measured only over the orthogonal plane defined by $\{\boldsymbol{x} \text{ st. } (\boldsymbol{x} - \boldsymbol{u})^T\boldsymbol{u} = 0\}$. Hence, the direction $\boldsymbol{V}_c/||\boldsymbol{V}_c||$ seems to be a natural choice if the descent direction $\boldsymbol{d}$ in line 3 of Algorithm \ref{alg:descent_direction} is obtained through approximations rather than exact computations. 

\begin{proposition} \label{prop:monotone_transform}
 Given a collection objective functions $f_1,...,f_m$ and a collection of strictly increasing and differentiable transformations $g_1,...,g_m:\mathbb{R}\to \mathbb{R}$, the central descent direction $\boldsymbol{V}_c(\boldsymbol{x})$ calculated with respect to the objective functions $f_1,...,f_m$ is equal to the central descent direction calculated with respect to the objective functions $g_1\circ f_1,...,g_m \circ f_m$.
\end{proposition}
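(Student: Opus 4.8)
The plan is to reduce the transformed problem's feasibility constraints to those of the original problem by exploiting the chain rule together with the normalization appearing on the right-hand side of the constraints in (\ref{eq:central_descent}). First I would compute, for each $i$, the gradient of the composition: by the chain rule, $\nabla (g_i \circ f_i)(\boldsymbol{x}) = g_i'(f_i(\boldsymbol{x}))\, \nabla f_i(\boldsymbol{x})$. Writing $c_i \equiv g_i'(f_i(\boldsymbol{x}))$, and noting that $g_i$ strictly increasing and differentiable gives $c_i \geq 0$ (with $c_i > 0$ wherever the derivative does not vanish), the $i$-th constraint of the transformed problem reads $c_i\, \nabla f_i(\boldsymbol{x})^T \boldsymbol{V} \leq -||c_i \nabla f_i(\boldsymbol{x})|| = -c_i ||\nabla f_i(\boldsymbol{x})||$.

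The key observation is then that, whenever $c_i > 0$, dividing both sides by the positive scalar $c_i$ recovers \emph{exactly} the original constraint $\nabla f_i(\boldsymbol{x})^T \boldsymbol{V} \leq -||\nabla f_i(\boldsymbol{x})||$. Thus the feasible region of the transformed central-descent problem coincides, constraint by constraint, with that of the original problem. Since the objective $\tfrac{1}{2}||\boldsymbol{V}||^2$ in (\ref{eq:central_descent}) does not reference the objective functions at all, and since it is strictly convex while the feasible set is a (possibly empty) intersection of half-spaces, identical feasible sets force the same unique minimizer; therefore the two central descent directions coincide.

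The point requiring care, rather than a genuine obstacle, is the possibility that $c_i = 0$ for some $i$, which can occur even for strictly increasing differentiable $g_i$ (for instance $g_i(t) = t^3$ at a point where $f_i(\boldsymbol{x}) = 0$). In that degenerate case the $i$-th constraint of the transformed problem collapses to $0 \leq 0$ and becomes vacuous, so the equivalence can fail at such isolated points. I would therefore present the argument under the assumption $g_i'(f_i(\boldsymbol{x})) > 0$ for all $i$, or equivalently record that the proposition holds wherever none of the derivatives $g_i'$ vanish at $f_i(\boldsymbol{x})$. Finally, I would emphasize that this scale-invariance is precisely the feature separating $\boldsymbol{V}_c$ from the steepest descent direction $\boldsymbol{V}_s$ of (\ref{eq:steepest_descent}): the normalization by $||\nabla f_i(\boldsymbol{x})||$ on the right-hand side is exactly what cancels the positive factor $c_i$ introduced by the monotone reparametrization, whereas the unnormalized linear term defining $\boldsymbol{V}_s$ would not be invariant under such transformations.
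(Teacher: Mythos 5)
Your argument is the same as the paper's: apply the chain rule to get $\nabla(g_i\circ f_i)(\boldsymbol{x}) = g_i'(f_i(\boldsymbol{x}))\nabla f_i(\boldsymbol{x})$ and observe that the positive factor $g_i'(f_i(\boldsymbol{x}))$ cancels from both sides of each constraint in (\ref{eq:central_descent}), leaving the feasible set and hence the unique minimizer unchanged. Your additional caveat about the degenerate case $g_i'(f_i(\boldsymbol{x}))=0$ (e.g.\ $g_i(t)=t^3$ at $t=0$), where the $i$-th constraint becomes vacuous and the equivalence can fail, is a legitimate refinement that the paper's own one-line proof silently elides, and stating the result under the hypothesis $g_i'(f_i(\boldsymbol{x}))>0$ as you propose is the correct fix.
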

\begin{proof}
The proof follows immediately from the chain rule: $\tfrac{\partial}{\partial x_j} \left( g_i( f_i (\boldsymbol{x}))\right) = g'_i(f_i(\boldsymbol{x}))\cdot \tfrac{\partial}{\partial x_j} f_i(\boldsymbol{x})$. Applying the chain rule to the central descent direction on the monotonically transformed problem we find that arg$\min \{ \tfrac{1}{2} ||\boldsymbol{V}||^2 \text{ st. } g'_i(f_i(\boldsymbol{x}))\nabla f_i(\boldsymbol{x})^T\boldsymbol{V} \leq -||g'_i(f_i(\boldsymbol{x}))\nabla f_i(\boldsymbol{x})||\}$ is equal to arg$\min \{ \tfrac{1}{2} ||\boldsymbol{V}||^2 \text{ st. } \nabla f_i(\boldsymbol{x})^T\boldsymbol{V} \leq -||\nabla f_i(\boldsymbol{x})||\}$ since the terms $g'_i(f_i(\boldsymbol{x}))$ cancel out.
\end{proof}
Thus, Proposition \ref{prop:monotone_transform} ensures that the central descent direction is invariant to changes on the scales of the objective functions. In contrast, the steepest descent direction in (\ref{eq:steepest_descent}) is not only sensitive to monotone transformations, but also, sensitive to linear transformations of the objectives; i.e. the direction $\boldsymbol{V}_s(\boldsymbol{x})$ obtained by considering the objective functions in $F(\boldsymbol{x}) \equiv [f_1(\boldsymbol{x}), f_2(\boldsymbol{x})]^T$ is not the same as the one obtained by considering $F(\boldsymbol{x}) \equiv [f_1(\boldsymbol{x}), \kappa f_2(\boldsymbol{x})]^T$ for $\kappa>0$ with $\kappa \neq 1$. This property can dramatically warp the path taken by the descent direction algorithm if the scales are not a-priori fine tuned, a requirement that might be difficult to meet since multi-objective optimization is adopted precisely when the relative weights of the objectives is unknown. We illustrate this contrast between $\boldsymbol{V}_s$ and $\boldsymbol{V}_c$ in Figures \ref{fig:ContourCauchy} to \ref{fig:Vs_and_Vc}. Figure \ref{fig:ContourCauchy} depicts the level sets of the measure of proximity to critical conditions induced by $\boldsymbol{V}_s$ as well as the stream-lines (the curves produced by ``releasing a particle to flow in the direction of $\boldsymbol{V}_s$'' ) of the problem illustrated in Figure \ref{fig:EfficientSet} when the objective functions are multiplied by different constants. The warping effect can over-weigh one objective over the other producing contour-lines and stream-lines that can even parallel the efficient set making the descent direction algorithm unnecessarily ``go arround'' close-by solutions to converge at distant ones. And, this effect can be dramatically intensified with different monotone transformations. Figure \ref{fig:StreamLinesVc} shows both the contour-lines as well as the stream-lines induced by $\boldsymbol{V}_c$, and, as can be seen the curves follow a more natural path towards a close-by efficient solution irrespective of the scale adopted in the representation of the objective functions. Finally, Figure \ref{fig:Vs_and_Vc} depicts $\boldsymbol{V}_s$ and $\boldsymbol{V}_c$ for varying values of $||\nabla f_i(\boldsymbol{x})||$ for $i=1,2$ as well as an approximate the construction of $\boldsymbol{V}_c$.

\begin{figure}
  \includegraphics[width=\linewidth]{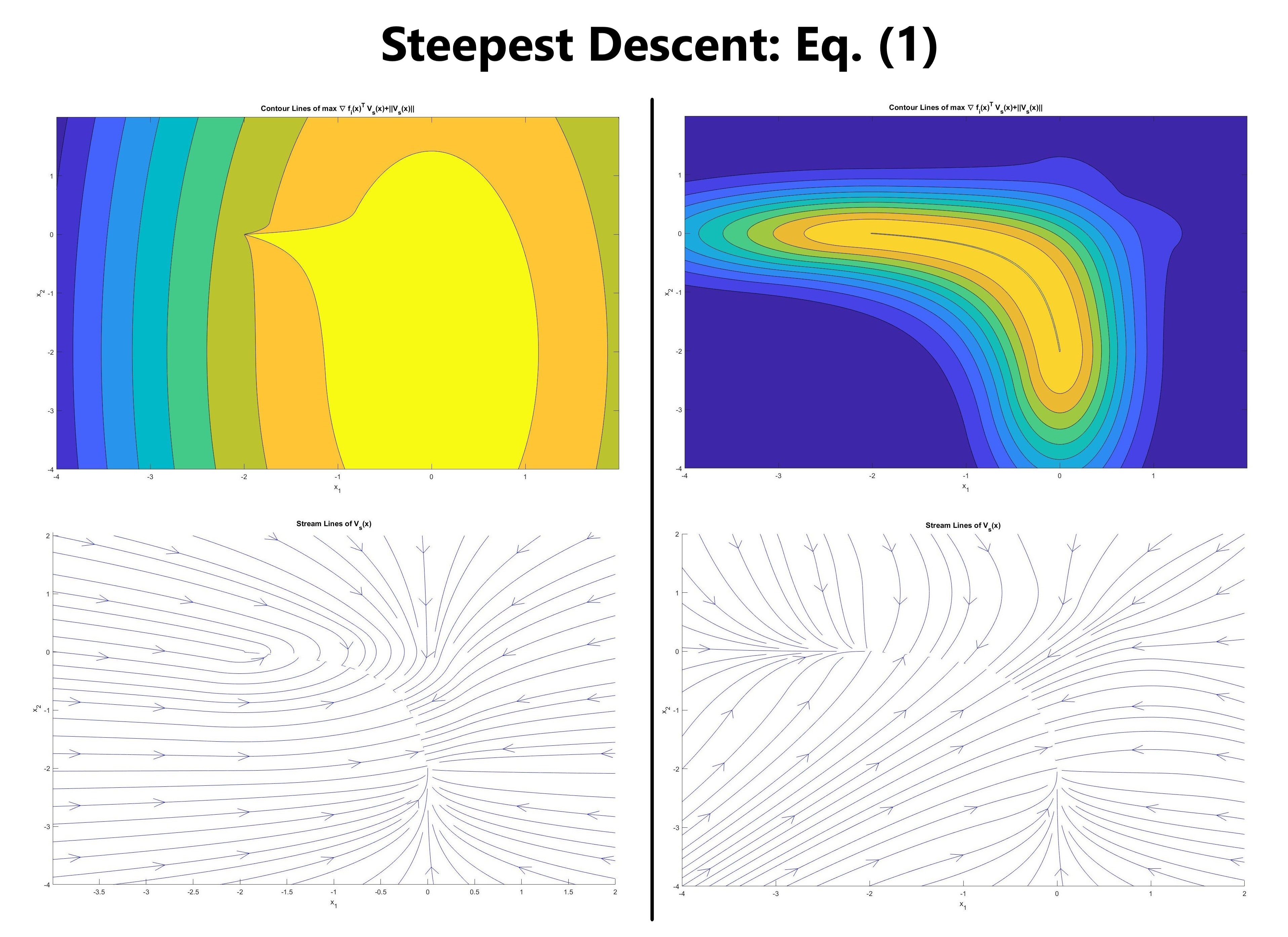}
  \caption{\label{fig:ContourCauchy}The top two images depict the level sets of the measure of proximity induced by the steepest descent (\ref{eq:steepest_descent}), i.e the level sets of $\min_{\boldsymbol{V}\in\mathbb{R}^n} \max_{i = 1,...,m}\nabla f_i(\boldsymbol{x})^T \boldsymbol{V}  +\frac{1}{2}||\boldsymbol{V}||^2$. Since the steepest descent direction is not metric-independent, the level sets of the measure of proximity can be warped (as it is on the left side) when the objectives are not measured with some comparable scale (as on the right side). The path taken by the steepest descent method is affected by the warping and can produce curves that are arbitrarily closet to the efficient set and yet parallel the efficient set. The path of the steepest descent method (when the steps are sufficiently small) will follow the stream lines depicted on the two lower images.   }
\end{figure}

\begin{figure}
  \includegraphics[width=.9\linewidth]{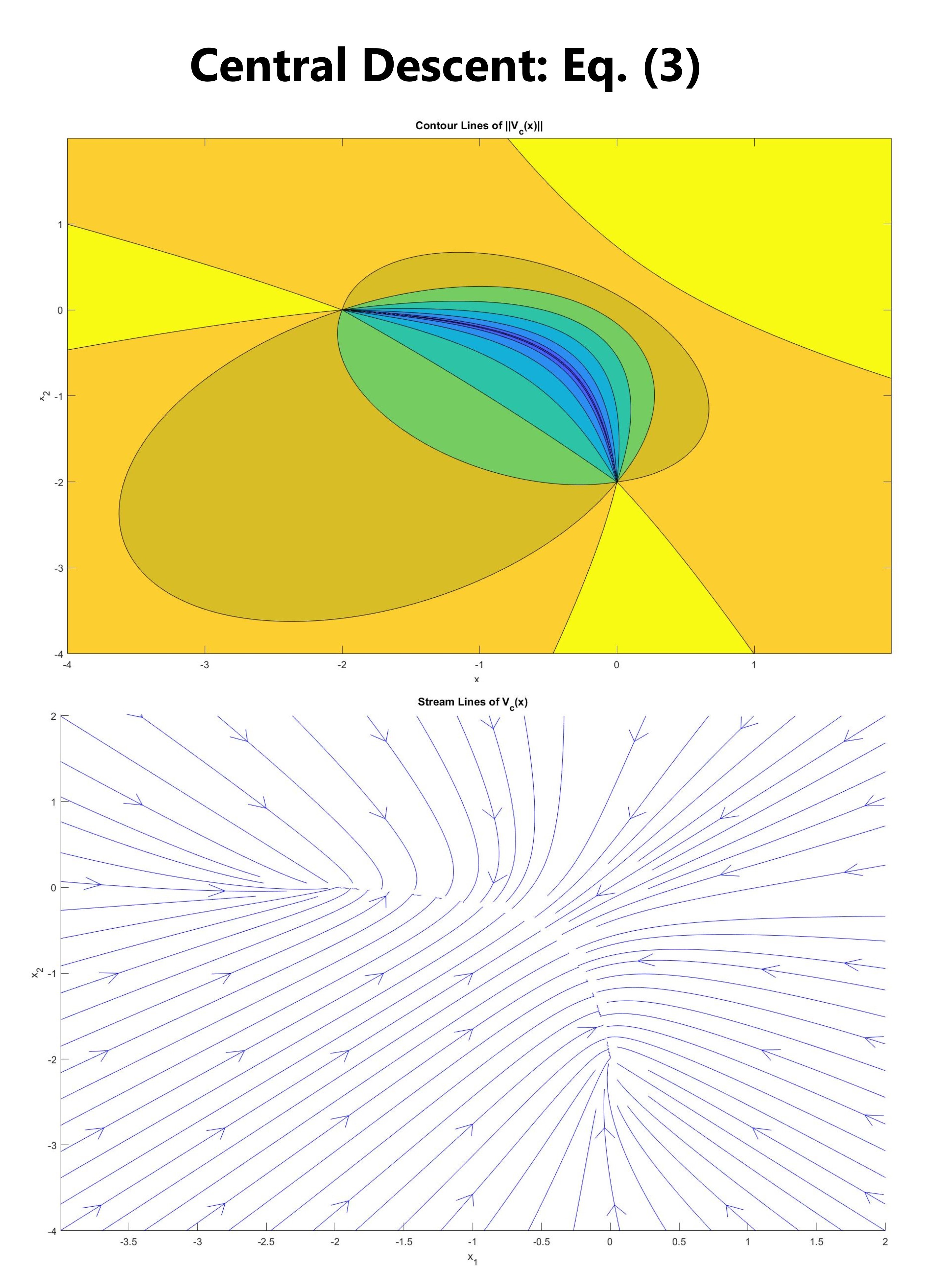}
  \caption{\label{fig:StreamLinesVc}The top image depicts the level sets of $||\boldsymbol{V}_c(\boldsymbol{x})||$ (the same as the bottom right of Figure \ref{fig:EfficientSet}); and, the bottom image depicts the stream lines induced by the central descent direction. Notice that since the central descent direction is unaffected by the rescaling of the objective functions, these level sets are not warped by different scales/representations of the same collection of objectives. Also, notice that proximity to the minima of the mono-objective functions is not measured by this metric, as the level curves meet at a sharp angle at each individual minima; instead it measures the angle between the gradients independently of their norms. This produces (on the bottom figure) stream lines that are metric independent and are not warped by arbitrary scaling choices.}
\end{figure}

\begin{figure}
  \includegraphics[width=\linewidth]{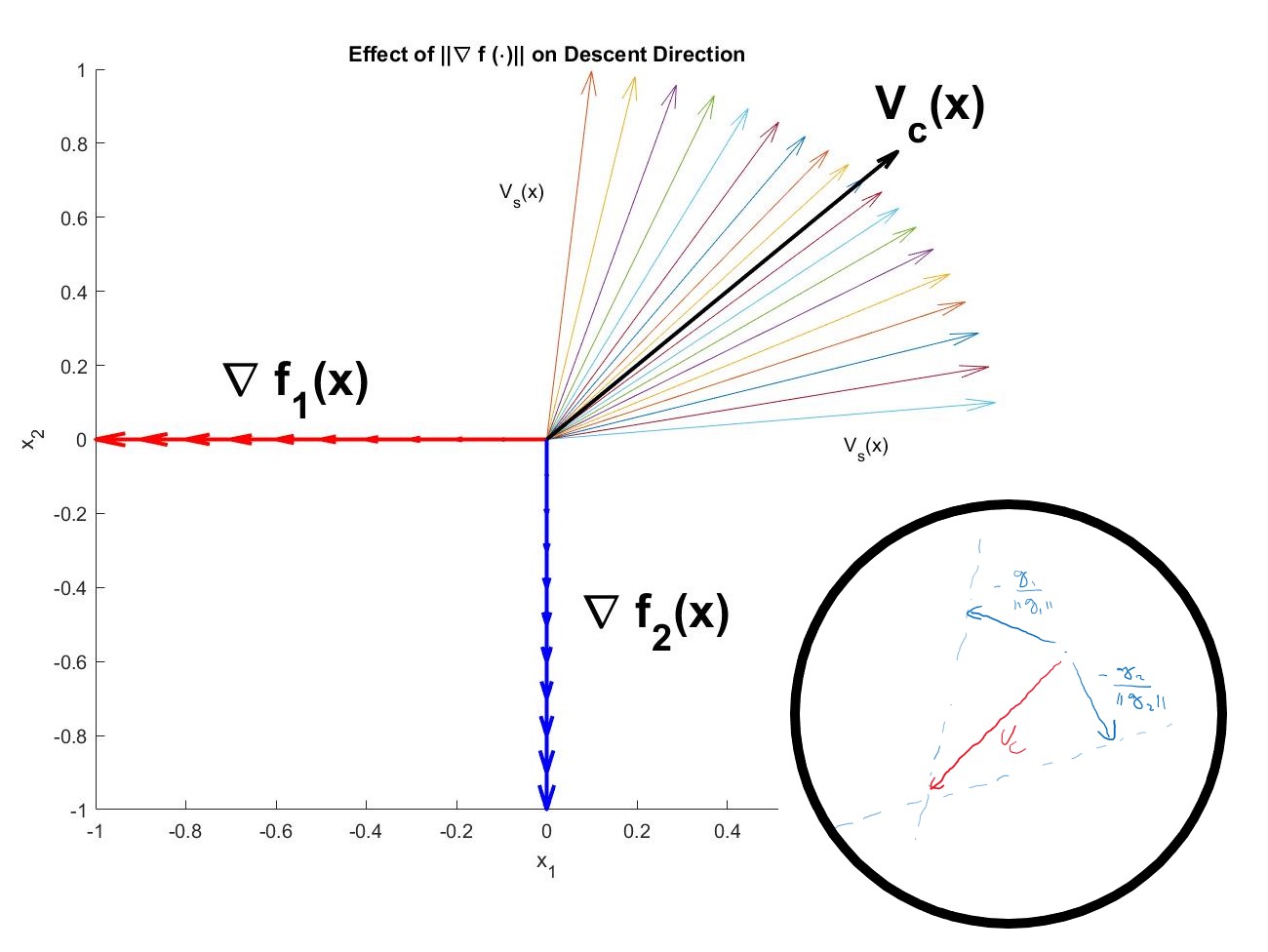}
  \caption{\label{fig:Vs_and_Vc} The background image depicts the effect of changing the scales of the objective functions. The adoption of different scales on the objective functions alter the sizes of the gradients $\boldsymbol{g}_i \equiv \nabla f_i (\boldsymbol{x})$ for $i=1,2$, but not the directions of the gradients. Both size and direction of the central descent are unaffected by these changes in scale, however the direction of $\boldsymbol{V}_s$  described in (\ref{eq:steepest_descent}) is affected by these changes in scale. The size of $\boldsymbol{V}_s$ is also affected by such changes, however, here we only depict the projections of the vectors $\boldsymbol{V}_s$ and $\boldsymbol{V}_c$ to the unit ball for ease of visualization. The smaller image in the bottom right circle depicts the geometric construction of the vector $\boldsymbol{V}_c$.  The central descent direction can be obtained by bisecting the angle between $-\nabla f_1(\boldsymbol{x})$ and $-\nabla f_2(\boldsymbol{x})$; and, the value of $||\boldsymbol{V}_c(\boldsymbol{x})||$ is obtained by intersecting perpendicular lines from $\boldsymbol{g}_1/||\boldsymbol{g}_1||$ and $\boldsymbol{g}_2/||\boldsymbol{g}_2||$. The longer the vector depicting $\boldsymbol{V}_c(\cdot)$ the closer $\boldsymbol{g}_1$ and $\boldsymbol{g}_2$ are from pointing in opposite directions.  }
\end{figure}

\section{Incremental central descent method} \label{sec:incremental_descent}

In this section we delineate our main results. Here we describe a multi-objective incremental descent approach and analyse both global guarantees as well as theoretical speed of convergence. The choice of performing incremental approximations to the central descent, rather than the steepest descent, seems to be well justified since, as demonstrated in the last section,  the central descent direction is robust under monotone transformations of the objective functions (i.e. no a priori fine-tuning of the scales is necessary to avoid warping of the directions) and more importantly, it is the direction farthest away from the non-descent directions. Hence, small errors in the estimate $\hat{\boldsymbol{d}}$ of $\boldsymbol{V}_c/||\boldsymbol{V}_c||$ are less likely to produce a non-descent direction when compared to any other unit vector in $\{\boldsymbol{v} \text{ st. } \nabla f_i (\boldsymbol{x})^T\boldsymbol{v} \leq 0 \text{ for i = 1,...,m}\}$. 

First we provide our global guarantees for vanishing step-sizes:

\subsection{Global guarantees for vanishing step-sizes} 
In the following, all that is assumed is that the sequence of step-sizes $\{\alpha_k\}_{k = 1,2,...}$ satisfy $\alpha_k>0$ for all $k$ and $\lim_{k\to \infty}\alpha_k = 0$ and $\sum_{k \in \mathbb{N}}\alpha_k = \infty$. This condition can be ensured by pre-specifying the step-sizes (e.g. $\alpha_k = 1/k$) or by performing a search in vanishing intervals. The algorithm described bellow is initiated with arbitrary values for $\boldsymbol{\hat{x}}\in \mathbb{R}^n$ and non-null $\boldsymbol{\hat{g}}_1,...,\boldsymbol{\hat{g}}_m \in \mathbb{R}^n$.

\begin{algorithm}[H]
\SetAlgoLined
initialize $\boldsymbol{\hat{x}}\in \mathbb{R}^n$ and $\boldsymbol{\hat{g}}_1,...,\boldsymbol{\hat{g}}_m \in \mathbb{R}^n\setminus \boldsymbol{0}$\ \ and \ \  $k\leftarrow 1$ \ \ and \ \ $\alpha \leftarrow \alpha_k$ \ \ and \ \ $t \leftarrow 1$\;
 \While{True}{
  $\boldsymbol{\hat{g}}_t\leftarrow \nabla f_t(\hat{x})$ \ \ and if $\boldsymbol{\hat{g}}_t$ is null then stop\;
    $\boldsymbol{\hat{V}} \leftarrow \text{argmin}\{||\boldsymbol{V}||^2 \text{ st. } \boldsymbol{\hat{g}}_i^T\boldsymbol{V} \leq -||\boldsymbol{\hat{g}}_i|| \text{ for } i = 1,...,m \}$ and if  $\boldsymbol{\hat{V}}$ is empty then stop\;
    $\hat{\boldsymbol{x}} \leftarrow \hat{\boldsymbol{x}} + \alpha \boldsymbol{\hat{V}}/||\boldsymbol{\hat{V}}||$ \ \ and \ \ $k\leftarrow k + 1$ \ \ and \ \ $\alpha \leftarrow \alpha_k$ \ \ and \ \ $t \leftarrow (t+1) \text{ mod } m$\;
 }
 \caption{\label{alg:central_basic}Increment central descent}
\end{algorithm}

We will now prove the convergence properties of Algorithm \ref{alg:central_basic}.

\begin{theorem}\label{the:incremental_central_descent}
Algorithm \ref{alg:central_basic} produces a subsequence of estimates $\{\boldsymbol{\hat{x}}_{k_j}\}_{j = 1,2,...}$ such that as $j\to \infty$ either (i) the gradient $\nabla f_i(\boldsymbol{\hat{\boldsymbol{x}}}_{k_j})$ vanishes for some $i = 1,...,m$; or (ii) the the central descent direction $\boldsymbol{V}_c(\boldsymbol{\hat{\boldsymbol{x}}}_{k_j})$ is unbounded; or (iii) all functions are unbounded from bellow and decrease indefinitely:
\begin{equation}\label{eq:central_guarantees}
\begin{array}{llllllll}
(i) & \lim_{j\to \infty}||\nabla f_i(\boldsymbol{\hat{\boldsymbol{x}}}_{k_j})|| = 0 & \text{ or } & (ii) & ||\boldsymbol{V}_c(\boldsymbol{\hat{\boldsymbol{x}}}_{k_j})|| \to \infty & \text{ or } & (iii) & \lim_{j\to \infty}f_i(\hat{\boldsymbol{x}}_{k_j}) = -\infty.\\
& \text{for some i = 1,...,m} & & & & & & \text{for all i = 1,...,m}
\end{array}
\end{equation}   
\end{theorem}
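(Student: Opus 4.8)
The plan is to argue by cases against the negations of (i) and (ii), and show that whenever both fail along every subsequence, condition (iii) must hold for the whole sequence. Concretely: if there is a subsequence along which some $\|\nabla f_i(\boldsymbol{\hat{x}}_{k_j})\|\to 0$, we are in case (i); if there is a subsequence along which $\|\boldsymbol{V}_c(\boldsymbol{\hat{x}}_{k_j})\|\to\infty$, we are in case (ii); otherwise we obtain uniform bounds $\|\nabla f_i(\boldsymbol{\hat{x}}_k)\|\geq c>0$ for every $i$ and all large $k$, together with $\|\boldsymbol{V}_c(\boldsymbol{\hat{x}}_k)\|\leq R$, and the task reduces to showing that these two bounds force every $f_i(\boldsymbol{\hat{x}}_k)\to-\infty$. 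At the outset I would dispatch the degenerate finite-termination branch: if some $\boldsymbol{\hat{g}}_t$ is null or the feasibility problem is empty the loop halts at a critical point, so the asymptotic analysis below applies to a genuinely infinite run.

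The first technical ingredient I would establish is control of the staleness of the stored gradients. Because the index $t$ cycles through $1,\dots,m$, at iteration $k$ each stored vector $\boldsymbol{\hat{g}}_i$ equals $\nabla f_i$ evaluated at some earlier iterate $\boldsymbol{\hat{x}}_{k_i}$ with $k-m\leq k_i\leq k$. Since the update direction is normalized, each step moves the iterate by exactly $\alpha_l$, so $\|\boldsymbol{\hat{x}}_k-\boldsymbol{\hat{x}}_{k_i}\|\leq\sum_{l=k_i}^{k-1}\alpha_l\leq m\max_{k-m\leq l< k}\alpha_l$, which tends to $0$ because $\alpha_l\to 0$. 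The $L$-Lipschitz assumption then yields $\|\boldsymbol{\hat{g}}_i-\nabla f_i(\boldsymbol{\hat{x}}_k)\|\leq L\,m\max_{k-m\leq l<k}\alpha_l\to 0$, uniformly in $i$; the stored data is thus asymptotically exact.

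With staleness controlled, in the remaining case I would first transfer the bound on $\|\boldsymbol{V}_c\|$ to a bound on the computed $\boldsymbol{\hat{V}}_k$. Using $\|\nabla f_i(\boldsymbol{\hat{x}}_k)\|\geq c$ and $\|\boldsymbol{V}_c(\boldsymbol{\hat{x}}_k)\|\leq R$, a direct margin computation (the quantitative content of Lemma \ref{lem:lower} part 2) shows that once the staleness is small enough the vector $2\boldsymbol{V}_c(\boldsymbol{\hat{x}}_k)$ still satisfies $\boldsymbol{\hat{g}}_i^T(2\boldsymbol{V}_c)\leq-\|\boldsymbol{\hat{g}}_i\|$ for all $i$, hence $\|\boldsymbol{\hat{V}}_k\|\leq 2R$ for all large $k$. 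Writing $\boldsymbol{d}_k\equiv\boldsymbol{\hat{V}}_k/\|\boldsymbol{\hat{V}}_k\|$, I then estimate the true directional derivative $\nabla f_i(\boldsymbol{\hat{x}}_k)^T\boldsymbol{d}_k\leq\boldsymbol{\hat{g}}_i^T\boldsymbol{d}_k+\|\nabla f_i(\boldsymbol{\hat{x}}_k)-\boldsymbol{\hat{g}}_i\|\leq-\|\boldsymbol{\hat{g}}_i\|/\|\boldsymbol{\hat{V}}_k\|+o(1)$, and the uniform bounds $\|\boldsymbol{\hat{g}}_i\|\geq c/2$ and $\|\boldsymbol{\hat{V}}_k\|\leq 2R$ make this at most $-\gamma<0$ for every $i$ and all large $k$. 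The descent lemma for $L$-Lipschitz gradients then gives $f_i(\boldsymbol{\hat{x}}_{k+1})\leq f_i(\boldsymbol{\hat{x}}_k)-\gamma\alpha_k+\tfrac{L}{2}\alpha_k^2\leq f_i(\boldsymbol{\hat{x}}_k)-\tfrac{\gamma}{2}\alpha_k$ for large $k$, absorbing the quadratic term since $\alpha_k\to 0$. Summing and invoking $\sum_k\alpha_k=\infty$ forces $f_i(\boldsymbol{\hat{x}}_k)\to-\infty$ for every $i$, which is precisely (iii) along the full sequence.

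I expect the main obstacle to be the feasibility-transfer step that bounds $\|\boldsymbol{\hat{V}}_k\|$ \emph{uniformly} in $k$ rather than pointwise. The qualitative statement of Lemma \ref{lem:lower} supplies a perturbation threshold that a priori depends on the point, so the crux is to make that threshold depend only on the uniform constants $c$ and $R$; the explicit margin inequality $\|\boldsymbol{\hat{g}}_i-\nabla f_i\|\,[(1+\delta)R+1]\leq\delta c$ (with, say, $\delta=1$) is what secures this uniformity and couples cleanly with the staleness bound, since its left-hand side is $o(1)$ while its right-hand side is a fixed positive constant. Everything else — the cyclic-staleness estimate, the directional-derivative bound, and the telescoping of the descent inequality — is routine once this uniform robustness bound is in hand.
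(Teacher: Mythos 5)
Your proof is correct and shares the paper's overall skeleton --- the cyclic-staleness bound $\|\hat{g}_{i,k}-\nabla f_i(\hat{x}_k)\|\le L\sum_{j=k-m}^{k}\alpha_j\to 0$, a dichotomy, and a telescoped descent inequality combined with $\sum_k\alpha_k=\infty$ --- but your dichotomy runs in the opposite direction. The paper splits on whether the \emph{computed} directions $\hat{V}_k$ stay bounded: if not, it transfers unboundedness to the true central direction $V_c(\hat{x}_k)$ via Lemma~\ref{lem:lower} part~1 (stability of infeasibility of the ball-restricted problem) together with a compactness argument on the normalized gradients; if so, it plugs the bound $\|\hat{V}_k\|\le c_2$ directly into the descent estimate. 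You instead split on whether the \emph{true} $V_c(\hat{x}_k)$ stays bounded, and in the bounded branch transfer that bound to $\hat{V}_k$ by showing $2V_c(\hat{x}_k)$ remains feasible for the perturbed constraints once the staleness is small --- the quantitative, uniform version of Lemma~\ref{lem:lower} part~2 that you correctly flag as the crux. Your margin computation does close: feasibility of $2V_c(\hat{x}_k)$ for the stored gradients needs $\|\hat{g}_{i,k}-\nabla f_i(\hat{x}_k)\|\,(2R+1)\le\|\nabla f_i(\hat{x}_k)\|$, whose left side is $o(1)$ while the right side is bounded below by $c$, so $\|\hat{V}_k\|\le 2R$ eventually and the descent argument goes through as you describe. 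What each approach buys: yours states conclusion (ii) directly in terms of $V_c$ and avoids the subsequence-extraction step the paper needs in its Case~1, at the price of requiring the explicit uniform perturbation threshold; the paper never quantifies Lemma~\ref{lem:lower} but must then argue that divergence of $\hat{V}_{k_j}$ forces divergence of $V_c(\hat{x}_{k_j})$. One detail to make explicit in your write-up: if the constraint set defining $V_c(\hat{x}_k)$ is empty for infinitely many $k$, this must be folded into case (ii) (the paper's convention is that an empty feasible set counts as $\|V_c\|$ unbounded), so that your ``otherwise'' branch legitimately assumes $V_c(\hat{x}_k)$ exists and is uniformly bounded.
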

\begin{proof}
In this proof we will show that if (i) does not occur, then either (ii) or (iii) must occur. The negation of (i)  implies that $||\nabla f_i(\boldsymbol{\hat{x}}_{k})||$ is lower-bounded by some positive real value $c_1$ for all $i=1,...,m$ and all $k \in \mathbb{N}$ is thus assumed henceforth.

Notice that for any iteration $k\geq m+1$ and any $i = 1,...,m$ we have that $||\nabla f_i(\boldsymbol{\hat{x}}_k) - \boldsymbol{\hat{g}}_{i,k}||= ||\nabla f_i(\boldsymbol{\hat{x}}_k) - \nabla f_i(\boldsymbol{\hat{x}}_\tau)||\leq L ||\boldsymbol{\hat{x}}_k-\boldsymbol{\hat{x}}_\tau||$ for some $\tau$ between $k-m$ and $k$; and furthermore $ ||\boldsymbol{\hat{x}}_k-\boldsymbol{\hat{x}}_\tau|| \leq  ||\boldsymbol{\hat{x}}_k-\boldsymbol{\hat{x}}_{k-1}||+ ||\boldsymbol{\hat{x}}_{k-1}-\boldsymbol{\hat{x}}_{k-2}||+...+ ||\boldsymbol{\hat{x}}_{k-m+1}-\boldsymbol{\hat{x}}_{k-m}||=\sum_{k-m}^{k}\alpha_j$; and therefore
\begin{equation} \label{eq:grad_converge}
||\nabla f_i(\boldsymbol{\hat{x}}_k) - \boldsymbol{\hat{g}}_{i,k}||\leq L\sum_{k-m}^{k}\alpha_j \ \ \text{ for any }k\geq m+1 \text{ and }i = 1,...m;
\end{equation}
and thus $||\nabla f_i(\boldsymbol{\hat{x}}_k) - \boldsymbol{\hat{g}}_{i,k}||$ goes to zero as $k$ increases. 
With this fact established we will now analyse two complementary cases:\\
\textbf{Case 1.}  There exists a subsequence $\{k_j\}_{j=1,...,\infty}$  for which $\lim_{j\to\infty}||\boldsymbol{\hat{V}}_{k_j}|| = \infty$.\\
\textbf{Case 2.}  There exists an upper-bound $c_2>0$ for which  $||\boldsymbol{\hat{V}}_k||\leq c_2$ for every $k\in \mathbb{N}$.\\

\textbf{Analysis of Case 1.}

Under the conditions of Case 1  all that is needed is to show that when $||\boldsymbol{\hat{V}}_{k_j}||\to \infty$ for $j\to \infty$, then the points $\boldsymbol{\hat{x}}_k$ produce a subsequence of central descent directions $\boldsymbol{V}_k\equiv \text{argmin}\{||\boldsymbol{V}||^2 \text{ st. } f_i(\boldsymbol{\hat{x}_k})^{T}V\leq -||\nabla f_i(\boldsymbol{\hat{x}}_k)|| \}$ that diverge. For this, notice that over the subsequence in which $||\boldsymbol{\hat{V}}_{k_j}||$ diverges we have: (A) the vectors $\hat{\boldsymbol{g}}_{i,k}/||\hat{\boldsymbol{g}}_{i,k}||$ are contained in a unit ball, and thus, there must exist a converging subsequence where $\lim_{k\to\infty}\hat{\boldsymbol{g}}_{i,k}/||\hat{\boldsymbol{g}}_{i,k}|| = \boldsymbol{u}_i$; and (B) $||\nabla f_i(\boldsymbol{\hat{x}}_{k})||$ is lower-bounded by some positive real value $c_1$ for all $i=1,...,m$ and all $k \in \mathbb{N}$; and (C) the value of $||\nabla f_i(\boldsymbol{\hat{x}}_k) - \boldsymbol{\hat{g}}_{i,k}||$ goes to zero as $k$ increases. The combination of (A), (B) and (C) imply that $\nabla f_i(\boldsymbol{\hat{x}}_k)/||\nabla f_i(\boldsymbol{\hat{x}}_k)||$ also converges to $\boldsymbol{u}_i$, and thus, by Lemma \ref{lem:lower} part 1 we conclude that the points $\boldsymbol{\hat{x}}_k$ produce a subsequence of central descent directions $\boldsymbol{V}_k\equiv \text{argmin}\{||\boldsymbol{V}||^2 \text{ st. } f_i(\boldsymbol{\hat{x}}_k)^{T}V\leq -||\nabla f_i(\boldsymbol{\hat{x}}_k)|| \}$ that diverge.\\

\textbf{Analysis of Case 2.}

Lipschitz continuity of the gradients ensures that for each i = 1,...,m we have 
\begin{equation} f_i(\boldsymbol{\hat{x}}_{k+1})-f_i(\boldsymbol{\hat{x}}_{k}) \leq \nabla f_i(\boldsymbol{\hat{x}}_{k})^{T}(\boldsymbol{\hat{x}}_{k+1}-\boldsymbol{\hat{x}}_{k}) +\tfrac{1}{2} L||\boldsymbol{\hat{x}}_{k+1}-\boldsymbol{\hat{x}}_{k}||^2;
\end{equation}
and thus
$$f_i(\boldsymbol{\hat{x}}_{k+1})-f_i(\boldsymbol{\hat{x}}_{k}) \leq \alpha_k \nabla f_i(\boldsymbol{\hat{x}}_{k})^{T} \boldsymbol{\hat{V}}_k/||\boldsymbol{\hat{V}}_k|| +\tfrac{1}{2} L\alpha_k^2 = \alpha_k [\boldsymbol{\hat{g}}_{i,k}+(\nabla f_i(\boldsymbol{\hat{x}}_{k})-\boldsymbol{\hat{g}}_{i,k})]^{T} \boldsymbol{\hat{V}}_k/||\boldsymbol{\hat{V}}_k|| +\tfrac{1}{2} L\alpha_k^2$$
$$\leq  \alpha_k \boldsymbol{\hat{g}}_{i,k}^{T} \boldsymbol{\hat{V}}_k/||\boldsymbol{\hat{V}}_k||+\alpha_k L\sum_{k-m}^{k}\alpha_j +\tfrac{1}{2} L\alpha_k^2\leq -\alpha_k ||\boldsymbol{\hat{g}}_{i,k}||/||\boldsymbol{\hat{V}}_k||+\alpha_k L\sum_{k-m}^{k}\alpha_j +\tfrac{1}{2} L\alpha_k^2;$$
where the first inequality of the second line is a consequence of (\ref{eq:grad_converge}) and the second inequality is a consequence of the definition of $\boldsymbol{\hat{V}}_k$. Now, using (\ref{eq:grad_converge}) a second time on the first term we obtain $-||\boldsymbol{\hat{g}}_{i,k}||\leq - ||\nabla f_i(\boldsymbol{\hat{x}}_k)||+L\sum_{k-m}^{k}\alpha_j$ and thus $-\alpha_k ||\boldsymbol{\hat{g}}_{i,k}||/||\boldsymbol{\hat{V}}_k||\leq - \alpha_k||\nabla f_i(\boldsymbol{\hat{x}}_k)||/||\boldsymbol{\hat{V}}_k||+\alpha_k L(\sum_{k-m}^{k}\alpha_j)/||\boldsymbol{\hat{V}}_k||$, and since by construction  $||\boldsymbol{\hat{V}}_k||\geq 1$ (when it exists) and by assumption $||\nabla f_i(\boldsymbol{\hat{x}}_k)||\geq c_1$, then we conclude that the first term is upper-bounded by $- \alpha_k c_1/||\boldsymbol{\hat{V}}_k||+\alpha_k L\sum_{k-m}^{k}\alpha_j$. Hence for every $i= 1,...,m$ and every $k\geq m+1$ we have
\begin{equation} \label{eq:clean_upper_bound}
f_i(\boldsymbol{\hat{x}}_{k+1})-f_i(\boldsymbol{\hat{x}}_{k}) \leq \alpha_k\left(  2L\sum_{k-m}^{k}\alpha_j +\tfrac{1}{2} L\alpha_k- \frac{c_1}{||\boldsymbol{\hat{V}}_k||}\right).
\end{equation}

Furthermore, under the conditions of Case 2, there exists a constant $c_2>0$ such that $||\boldsymbol{\hat{V}}_k||\leq c_2$ and thus $-c_1/||\boldsymbol{\hat{V}}_k||\leq -c_1/c_2$. Inserting this back into (\ref{eq:clean_upper_bound}) we obtain
$$ f_i(\boldsymbol{\hat{x}}_{k+1})-f_i(\boldsymbol{\hat{x}}_{k}) \leq \alpha_k\left(  2L\sum_{k-m}^{k}\alpha_j +\tfrac{1}{2} L\alpha_k- c_1/c_2\right).$$
Now notice that the first term within the brackets vanishes with increasing values of $k$, and thus for sufficiently large $k$ we have $2L\sum_{k-m}^{k}\alpha_j +\tfrac{1}{2} L\alpha_k\leq \tfrac {1}{2} c_1 /c_2$. Hence, for all $i=1,...,m$ and for all $k\geq \bar{k}$, for some $\bar{k}\in\mathbb{N}$, we have
\begin{equation}\label{eq:minimum_decrease}
 f_i(\boldsymbol{\hat{x}}_{k+1})-f_i(\boldsymbol{\hat{x}}_{k}) \leq -\tfrac{1}{2}\alpha_kc_1/c_2.
\end{equation}
Summing up the terms for $k\geq \bar{k}$ in equation (\ref{eq:minimum_decrease}) we obtain:
$$
 \left[\lim_{k\to \infty}f_i(\boldsymbol{\hat{x}}_{k})\right]-f_i(\boldsymbol{\hat{x}}_{\bar{k}}) \leq -\tfrac{1}{2}\frac{c_1}{c_2}\sum_{k\geq\bar{k}}\alpha_k = -\infty.$$
In this case all functions are unbounded and the sequence produces a subsequence of points in which all functions are simultaneously decreased indefinitely. This concludes our proof. 
\end{proof}

\subsection{A global $1/\sqrt{k}$ convergence of critical conditions for bounded functions} \label{sec:lowest_descent}
Assuming all $f_i$ are bounded from bellow we include one additional gradient computation per iteration and an Armijo-type sufficient decrease condition with some pre-specified parameter $\beta\in (0,1)$ for the construction of the step size:

\begin{algorithm}[H]
\SetAlgoLined
initialize $\boldsymbol{\hat{x}}\in \mathbb{R}^n$ and $\boldsymbol{\hat{g}}_1,...,\boldsymbol{\hat{g}}_m \in \mathbb{R}^n\setminus \boldsymbol{0}$\ \ and \ \  $k\leftarrow 1$ \ \ and \ \ $j \leftarrow 1$ \ \ and \ \ $t \leftarrow 2$\;
 \While{While True}{
  $\boldsymbol{\hat{g}}_{j}\leftarrow \nabla f_j(\hat{x})$; $\boldsymbol{\hat{g}}_t\leftarrow \nabla f_t(\hat{x})$ \ \ and if $\boldsymbol{\hat{g}}_j$ or $\boldsymbol{\hat{g}}_t$ is null then stop\;
    $\boldsymbol{\hat{V}} \leftarrow \text{argmin}\{||\boldsymbol{V}||^2 \text{ st. } \boldsymbol{\hat{g}}_i^T\boldsymbol{V} \leq -||\boldsymbol{\hat{g}}_i|| \text{ for } i = 1,...,m \}$ and if  $\boldsymbol{\hat{V}}$ is empty then stop\;
    $\alpha \leftarrow \text{max}_{l = 0,1,...,\infty}\alpha = (1/2)^l$ st. $f_j(\hat{\boldsymbol{x}} + \alpha \boldsymbol{\hat{V}}/||\boldsymbol{\hat{V}}||) - f_j(\hat{\boldsymbol{x}})\leq \beta \alpha \boldsymbol{\hat{g}_j}^T \boldsymbol{\hat{V}}/||\boldsymbol{\hat{V}}||$\;
    $\hat{\boldsymbol{x}} \leftarrow \hat{\boldsymbol{x}} + \alpha \boldsymbol{\hat{V}}/||\boldsymbol{\hat{V}}||$ \ \ and \ \ $k\leftarrow k + 1$ \; 
    choose $t\neq j$ between $1$ and $m$  \ \ and \ \ if $f_t(\hat{\boldsymbol{x}})< f_j(\hat{\boldsymbol{x}})$ then swap $j$ and $t$\;
 }
 \caption{\label{alg:lowest_central_descent}Increment central descent w/ inexact line-search}
\end{algorithm}

\begin{lemma}
\begin{equation}
\alpha\geq \alpha_{\min} \equiv \min\left\{\tfrac{1-\beta}{2L_j},1\right\}||\boldsymbol{\hat{g}}_j||/||\boldsymbol{\hat{V}}||
\end{equation}
\end{lemma}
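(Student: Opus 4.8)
The plan is to establish the lower bound $\alpha \geq \alpha_{\min}$ by analyzing the Armijo backtracking procedure in line 5 of Algorithm \ref{alg:lowest_central_descent}. First I would recall that the step-size $\alpha$ is the largest value of the form $(1/2)^l$ satisfying the sufficient-decrease condition
\begin{equation}\label{eq:armijo_cond}
f_j(\hat{\boldsymbol{x}} + \alpha \boldsymbol{\hat{V}}/||\boldsymbol{\hat{V}}||) - f_j(\hat{\boldsymbol{x}})\leq \beta \alpha \boldsymbol{\hat{g}_j}^T \boldsymbol{\hat{V}}/||\boldsymbol{\hat{V}}||.
\end{equation}
The key idea is standard in backtracking analysis: if $\alpha$ is obtained by halving, then either $\alpha = 1$ (no backtracking was needed), or the \emph{previous} trial value $2\alpha$ \emph{failed} the test. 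The strategy is to show that any step small enough to satisfy a quadratic upper bound coming from $L_j$-Lipschitz continuity of $\nabla f_j$ automatically satisfies \eqref{eq:armijo_cond}; hence backtracking cannot reduce $\alpha$ below that threshold.

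Concretely, writing $\boldsymbol{d}\equiv \boldsymbol{\hat{V}}/||\boldsymbol{\hat{V}}||$ so that $||\boldsymbol{d}|| = 1$, the descent lemma for $f_j$ gives
\begin{equation}\label{eq:descent_lemma_fj}
f_j(\hat{\boldsymbol{x}} + \alpha \boldsymbol{d}) - f_j(\hat{\boldsymbol{x}}) \leq \alpha \nabla f_j(\hat{\boldsymbol{x}})^T \boldsymbol{d} + \tfrac{1}{2} L_j \alpha^2.
\end{equation}
Because $\boldsymbol{\hat{g}}_j = \nabla f_j(\hat{\boldsymbol{x}})$ is computed fresh in line 3 at the current point, I would use $\nabla f_j(\hat{\boldsymbol{x}})^T\boldsymbol{d} = \boldsymbol{\hat{g}}_j^T\boldsymbol{d}$ exactly (no Lipschitz slack is needed here, unlike Theorem \ref{the:incremental_central_descent} where the gradient was stale). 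Comparing \eqref{eq:descent_lemma_fj} against the right-hand side of \eqref{eq:armijo_cond}, the sufficient-decrease test is guaranteed whenever $\alpha \boldsymbol{\hat{g}}_j^T\boldsymbol{d} + \tfrac{1}{2}L_j\alpha^2 \leq \beta \alpha \boldsymbol{\hat{g}}_j^T\boldsymbol{d}$, i.e. whenever $\tfrac{1}{2}L_j\alpha \leq -(1-\beta)\boldsymbol{\hat{g}}_j^T\boldsymbol{d}$. The feasibility constraint $\boldsymbol{\hat{g}}_j^T\boldsymbol{\hat{V}} \leq -||\boldsymbol{\hat{g}}_j||$ defining $\boldsymbol{\hat{V}}$ yields $\boldsymbol{\hat{g}}_j^T\boldsymbol{d} \leq -||\boldsymbol{\hat{g}}_j||/||\boldsymbol{\hat{V}}||$, so $-\boldsymbol{\hat{g}}_j^T\boldsymbol{d} \geq ||\boldsymbol{\hat{g}}_j||/||\boldsymbol{\hat{V}}||$. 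Substituting, the test is certainly met as soon as $\alpha \leq \tfrac{1-\beta}{L_j}||\boldsymbol{\hat{g}}_j||/||\boldsymbol{\hat{V}}||$.

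Finally I would convert this sufficient threshold into the stated bound. If no backtracking occurred, $\alpha = 1 \geq \alpha_{\min}$ trivially. Otherwise the accepted $\alpha$ is at least half of the smallest trial value that would have passed: since every $\alpha' \leq \tfrac{1-\beta}{L_j}||\boldsymbol{\hat{g}}_j||/||\boldsymbol{\hat{V}}||$ passes, the halving procedure cannot stop at a value smaller than half of this bound, giving $\alpha \geq \tfrac{1-\beta}{2L_j}||\boldsymbol{\hat{g}}_j||/||\boldsymbol{\hat{V}}||$. Combining the two cases yields $\alpha \geq \min\{\tfrac{1-\beta}{2L_j},1\}||\boldsymbol{\hat{g}}_j||/||\boldsymbol{\hat{V}}||$, which is exactly $\alpha_{\min}$. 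I expect the only delicate point to be the bookkeeping of the ``previous trial failed'' argument that produces the factor of $\tfrac{1}{2}$, together with correctly reading $\alpha_{\min}$ as the product of the $\min\{\cdot,1\}$ factor and the ratio $||\boldsymbol{\hat{g}}_j||/||\boldsymbol{\hat{V}}||$ rather than just the bracketed quantity; everything else is a direct application of the descent lemma and the feasibility constraint.
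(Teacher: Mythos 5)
Your argument is essentially the paper's: both apply the descent lemma for $f_j$ with the freshly computed gradient $\boldsymbol{\hat{g}}_j$, invoke the feasibility constraint $\boldsymbol{\hat{g}}_j^T\boldsymbol{\hat{V}}\leq -||\boldsymbol{\hat{g}}_j||$, and extract the factor $\tfrac{1}{2}$ from the fact that the previous trial $2\alpha$ failed the Armijo test -- the paper simply states this in contrapositive form rather than via your ``every $\alpha'\leq T$ passes'' threshold. The one caveat, which you share with the paper (whose proof silently omits the no-backtracking case), is that ``$\alpha=1\geq \alpha_{\min}$ trivially'' is not actually trivial, since $\alpha_{\min}$ scales with $||\boldsymbol{\hat{g}}_j||/||\boldsymbol{\hat{V}}||$, which is not a priori bounded by $1$.
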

\begin{proof}
When $2\alpha$ does not satisfy sufficient decrease condition we have 
$$f_j(\hat{\boldsymbol{x}}+2\alpha \boldsymbol{\hat{V}}/||\boldsymbol{\hat{V}}|| ) - f_j(\hat{\boldsymbol{x}}) > \beta 2\alpha\nabla f_j(\hat{\boldsymbol{x}})^T \boldsymbol{\hat{V}}/||\boldsymbol{\hat{V}}||$$
From Lipschits condition:
$$f_j(\hat{\boldsymbol{x}}+2\alpha \boldsymbol{\hat{V}} /||\boldsymbol{\hat{V}}|| ) - f_j(\hat{\boldsymbol{x}})\leq 2 \alpha \nabla f_j(\hat{\boldsymbol{x}})^T\boldsymbol{\hat{V}}/||\boldsymbol{\hat{V}}|| + \tfrac{L_j}{2}||2\alpha \frac{\boldsymbol{\hat{V}}}{||\boldsymbol{\hat{V}}||}||^2$$
$$\implies 2\alpha(1-\beta)\nabla f_j(\hat{\boldsymbol{x}})^T\boldsymbol{\hat{V}}/||\boldsymbol{\hat{V}}|| + 2L_j\alpha^2\geq 0$$
$$\implies -L_j\alpha \leq (1-\beta)\nabla f_j(\hat{\boldsymbol{x}})^T\boldsymbol{\hat{V}}/||\boldsymbol{\hat{V}}||\leq  - (1-\beta) ||\nabla f_j(\hat{\boldsymbol{x}})||/||\boldsymbol{\hat{V}}||$$
$$\implies \alpha \geq \frac{1-\beta}{L_j}||\nabla f_j(\hat{\boldsymbol{x}})||/||\boldsymbol{\hat{V}}|| = \tfrac{1-\beta}{2L_j}||\boldsymbol{\hat{g}}_j||/||\boldsymbol{\hat{V}}||$$
\end{proof}
\begin{theorem} \label{the:lowest_incremental_central_descent}
Suppose all functions $f_i$ are bounded from bellow and let $f^{\min}$ be a lower bound on all $f_i$. The incremental central descent method with inexact line searching generates a sequence such that:
\begin{equation} \label{eq:lowest_cost}
\min_{0\leq l\leq k-1} \frac{\min_{t = 1,...,m}||\nabla f_t(\hat{\boldsymbol{x}}_l)||}{||\hat{\boldsymbol{V}}_l||} \leq \sqrt{\frac{1}{k}\left(\frac{f_1(\hat{\boldsymbol{x}}_0)- f^{\min}}{\min\left\{\frac{\beta(1-\beta)}{2L},\beta\right\}}\right)}.
\end{equation}
\end{theorem}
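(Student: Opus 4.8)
The plan is to convert the per-iteration Armijo guarantee into a telescoping decrease of a single scalar \emph{potential} that tracks the value of the currently active objective, and then average. The one genuinely non-trivial ingredient is that the objective on which the line search is performed changes from iteration to iteration, so a naive per-objective telescoping fails; the swap rule at the end of the loop in Algorithm~\ref{alg:lowest_central_descent} is exactly what repairs this.

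First I would establish the per-iteration decrease. At iteration $l$ let $j_l$ denote the active index, so that $\boldsymbol{\hat{g}}_{j_l}=\nabla f_{j_l}(\boldsymbol{\hat{x}}_l)$ is computed exactly on line~3. The accepted step satisfies the sufficient-decrease condition
$$f_{j_l}(\boldsymbol{\hat{x}}_{l+1})-f_{j_l}(\boldsymbol{\hat{x}}_l)\leq \beta\,\alpha_l\,\boldsymbol{\hat{g}}_{j_l}^T\boldsymbol{\hat{V}}_l/||\boldsymbol{\hat{V}}_l||,$$
and the feasibility constraint defining $\boldsymbol{\hat{V}}_l$ gives $\boldsymbol{\hat{g}}_{j_l}^T\boldsymbol{\hat{V}}_l\leq-||\boldsymbol{\hat{g}}_{j_l}||$. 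Feeding in the step-size lower bound $\alpha_l\geq\min\{\tfrac{1-\beta}{2L_{j_l}},1\}\,||\boldsymbol{\hat{g}}_{j_l}||/||\boldsymbol{\hat{V}}_l||$ from the preceding lemma, together with $L_{j_l}\leq L$ and $\beta\in(0,1)$, yields
$$f_{j_l}(\boldsymbol{\hat{x}}_{l+1})-f_{j_l}(\boldsymbol{\hat{x}}_l)\leq -C\,\frac{||\nabla f_{j_l}(\boldsymbol{\hat{x}}_l)||^2}{||\boldsymbol{\hat{V}}_l||^2},\qquad C\equiv\min\Big\{\tfrac{\beta(1-\beta)}{2L},\beta\Big\}.$$

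Next comes the key step. The swap rule guarantees that, after the update, the new active index $j_{l+1}$ satisfies $f_{j_{l+1}}(\boldsymbol{\hat{x}}_{l+1})\leq f_{j_l}(\boldsymbol{\hat{x}}_{l+1})$: if the freshly picked index has a strictly smaller function value we swap to it, and otherwise we keep $j_l$, so in either case the value does not increase. Defining the potential $\phi_l\equiv f_{j_l}(\boldsymbol{\hat{x}}_l)$ and chaining this inequality with the per-iteration decrease gives $\phi_{l+1}\leq\phi_l-C\,||\nabla f_{j_l}(\boldsymbol{\hat{x}}_l)||^2/||\boldsymbol{\hat{V}}_l||^2$, which now telescopes across iterations even though the active objective changes. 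This is the part I expect to be the crux of the argument: the line search controls only the active objective, and without the swap a per-objective sum would not close, so recognizing that $\phi_l$ is the right single quantity to telescope is where the real work lies.

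Finally I would sum the last inequality from $l=0$ to $k-1$, use $\phi_0=f_1(\boldsymbol{\hat{x}}_0)$ (from the initialization $j\leftarrow1$) and $\phi_k=f_{j_k}(\boldsymbol{\hat{x}}_k)\geq f^{\min}$ to obtain $C\sum_{l=0}^{k-1}||\nabla f_{j_l}(\boldsymbol{\hat{x}}_l)||^2/||\boldsymbol{\hat{V}}_l||^2\leq f_1(\boldsymbol{\hat{x}}_0)-f^{\min}$. Replacing $||\nabla f_{j_l}(\boldsymbol{\hat{x}}_l)||$ by the smaller quantity $\min_{t}||\nabla f_t(\boldsymbol{\hat{x}}_l)||$, then bounding the minimum over $l$ of the $k$ nonnegative summands by their average, and finally taking square roots produces exactly the claimed bound; the closing manipulation is the standard ``minimum $\leq$ average'' argument familiar from the nonconvex rate for mono-objective gradient descent.
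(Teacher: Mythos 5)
Your proposal is correct and follows essentially the same route as the paper's proof: Armijo decrease on the active objective combined with the step-size lower bound, the swap rule to make the quantity $f_{j_l}(\hat{\boldsymbol{x}}_l)$ telescope across changes of the active index, and the standard minimum-versus-average argument to close. Your explicit potential $\phi_l$ is just a cleaner packaging of the chaining inequality the paper writes out directly.
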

\begin{proof}
In each iteration the following inequality holds for the objective function indexed by $j$
$$f_j(\hat{\boldsymbol{x}}^{k+1}) - f_j(\hat{\boldsymbol{x}}^k)\leq \beta \alpha^k \nabla f_j(\hat{\boldsymbol{x}}^k)^T\boldsymbol{\hat{V}}/||\boldsymbol{\hat{V}}||\leq -\beta \alpha^k ||\nabla f_j(\hat{\boldsymbol{x}}^k)||/||\boldsymbol{\hat{V}}^k||$$
and therefore
$$f_j(\hat{\boldsymbol{x}}^k)- f_j(\hat{\boldsymbol{x}}^{k+1}) \geq \beta \alpha^k ||\nabla f_j(\hat{\boldsymbol{x}}^k)||/||\boldsymbol{\hat{V}}|| \geq \beta \alpha_{\min} ||\nabla f_j(\hat{\boldsymbol{x}}^k)||/||\boldsymbol{\hat{V}}^k|| $$
$$ =  \beta  \min\left\{\frac{1-\beta}{2L},1\right\}||\nabla f_j(\hat{\boldsymbol{x}}^k)||^2/||\boldsymbol{\hat{V}}||^2 \geq   \min\left\{\frac{\beta(1-\beta)}{2L},\beta\right\}\min_{t=1,...,m}||\nabla f_t(\hat{\boldsymbol{x}}^k)||^2/||\boldsymbol{\hat{V}}||^2. $$
Now, let the index $j$ of iteration $k'$ be represented by $j(k')$. Notice that because of line 7 of Algorithm \ref{alg:lowest_central_descent} we have $f_{j(k')}(\hat{\boldsymbol{x}}^{k'+1})\geq f_{j(k'+1)}(\hat{\boldsymbol{x}}^{k'+1}) $. Thus,

$$f_{j(k')}(\hat{\boldsymbol{x}}^{k'})- f_{j(k'+1)}(\hat{\boldsymbol{x}}^{k'+1}) \geq f_{j(k')}(\hat{\boldsymbol{x}}^{k'})- f_{j(k')}(\hat{\boldsymbol{x}}^{k'+1}) $$
and therefore, between iterations, we obtain a decrease in the function values indexed by $j$ lower-bounded by
\begin{equation} \label{eq:lowest_decrease}
f_{j(k')}(\hat{\boldsymbol{x}}^{k'})- f_{j(k'+1)}(\hat{\boldsymbol{x}}^{k'+1}) \geq  \min\left\{\frac{\beta(1-\beta)}{2L},\beta\right\}\min_{t=1,...,m}||\nabla f_t(\hat{\boldsymbol{x}}^{k'})||^2/||\boldsymbol{\hat{V}}^{k'}||^2. \end{equation}

By summing the terms in equation (\ref{eq:lowest_decrease}) for varying for values of $k'$ between $0$ and $k-1$ we find:
$$f_{j(0)}(\hat{\boldsymbol{x}}^{0})- f_{j(k-1)}(\hat{\boldsymbol{x}}^{k-1}) \geq  \min\left\{\frac{\beta(1-\beta)}{2L},\beta\right\}\sum_{l = 0}^{k-1}\min_{t=1,...,m}||\nabla f_t(\hat{\boldsymbol{x}}^l)||^2/||\boldsymbol{\hat{V}}^l||^2.$$
And therefore
$$f_{j(0)}(\hat{\boldsymbol{x}}^{0})- f^{\min} \geq  k \min\left\{\frac{\beta(1-\beta)}{2L},\beta\right\}\min_{0\leq l \leq k-1}\min_{t=1,...,m}||\nabla f_t(\hat{\boldsymbol{x}}^l)||^2/||\boldsymbol{\hat{V}}^l||^2$$

$$\implies \frac{1}{k}\left( \frac{f_{1}(\hat{\boldsymbol{x}}^{0})- f^{\min}}{ \min\left\{\frac{\beta(1-\beta)}{2L},\beta\right\}}\right) \geq  \min_{0\leq l \leq k-1}\min_{t=1,...,m}||\nabla f_t(\hat{\boldsymbol{x}}^l)||^2/||\boldsymbol{\hat{V}}^l||^2.$$
Which completes our proof.  \end{proof}

Theorem \ref{the:lowest_incremental_central_descent} provides, for the first time, a positive answer to question \textbf{Q2}; i.e. it is possible to produce the same $O(1/\sqrt{k})$ query complexity as mono-objective optimization with a global cost that is unaffected by increasing values of $m$. The method delineated makes use of two gradients per iteration and one mono-objective inexact line search irrespective of the value of $m$. Furthermore, since the inexact search problem solved by Algorithm \ref{alg:lowest_central_descent} is identical to the one tackled in \cite{oliveira3}, if line 5 were substituted with modern line search alternatives with sub-logarithmic complexity, such as the ones described in \cite{oliveira1,oliveira3}, we can furthermore guarantee that the inexact Armijo-type search has a vanishing contribution to the overall cost when compared to the gradient computations. 


\section{Discussions} \label{sec:discussions}
Current state-of-the-art solvers for multi-objective optimization problems require computing the gradients of all $m$ objective functions per iteration and one $m$ dimensional line-search to produce a worst case convergence to critical conditions at the rate of $O(1/\sqrt{k})$, where $k$ is the iteration count.  Here, we propose an incremental descent method that achieves the same rate of $O(1/\sqrt{k})$ with at most two gradient computations per iteration and one mono-objective line search; i.e. a reduction in the computational cost by a factor of $m$. Furthermore, unlike other incremental strategies developed in the mono-objective literature which require convexity type requirements, these results are obtained solely under the assumption that the objective functions have  $L-$Lipschitz continuous gradients.

The methods here developed make use of a brand new descent direction much similar to the Cauchy's steepest descent, which we term the \emph{central descent}  direction. The central descent is shown to have improved robustness and geometric guarantees which are not shared by other directions considered so far in the literature. And, when approximated incrementally, it allows the construction of the method here proposed with a computational cost that is unaffected by increasing values of $m$.

\paragraph{Future work} The results here attained produce a convergence with a query complexity  matching that of mono-objective optimization. However, as mentioned in the discussion of \textbf{P1} to \textbf{P3}, since multi-objective optimization can be seen as a relaxation to mono-objective optimization, it is natural to expect that increasing values of $m$ should reduce the overall computational cost of the search, specifically when the dependence on the stopping criteria is made explicit. This is so because a larger area of the solution space is considered near-critical with the increase in the number of objectives. Since in this paper we focused solely on the iteration cost, the dependence on the stopping criteria remained open, and thus, with future work the methods here developed may prove to have a diminishing cost with increasing values of $m$ rather than a fixed cost as the results reported here. Furthermore, our formulation despite having attained a reduced query complexity, it still requires a memory cost that is dependent and increasing with $m$. It might be possible to exploit the fact that each gradient only shows up as a restriction in the formulation of (\ref{eq:central_descent}), and, formulate an update scheme in the computation of $\hat{\boldsymbol{V}}$ that might mitigate and even eliminate the increasing memory  cost with increasing values of $m$.  Another compelling direction of research is to investigate if it is possible to exploit the ``opposite direction'' of scalarization; i.e. to formulate multi-objective relaxations of mono-objective problems with the intent of reducing the overall query complexity of the problem. We are unaware of any research done in this direction and we believe a mapping of the trade-offs associated with such a relaxation might prove to open brand new methods of solving classical problems.

\begin{acks}
This paper was written when the first author was a graduate student at the Federal University of Minas Gerais.
\end{acks}

%
\bibliographystyle{ACM-Reference-Format}
\bibliography{samplebase}

%
\appendix

\section{Proof of Auxiliary Lemma \ref{lem:lower}} \label{sec:aux_lem1}
\begin{proof}
\textbf{Part 1.}
Define $\epsilon(R)$ as any positive value strictly less than $
\epsilon^*(R) \equiv \frac{z(R)+1}{R+1}\min_j\{||\boldsymbol{g}_j|| \text{ for j = 1, ..., m}\}$ where $z(R)$ is uniquely defined as:
\begin{equation} \label{eq:gap}
z(R) \equiv \left\{\begin{array}{ll}
\min_{z\in \mathbb{R}, \boldsymbol{v}\in \mathbb{R}^n} & z \\
\text{st.} & \boldsymbol{g}_i^T\boldsymbol{v} \leq z ||\boldsymbol{g}_i||\text{ for all i = 1,...,m};\\
 & ||\boldsymbol{v}||\leq R.
\end{array}\right.
\end{equation}

Notice that since the intersection  of $\{ \boldsymbol{v}\text{ st. } ||\boldsymbol{v}||\leq R\}$ with $\mathcal{S}$ is empty, it must be that $z(R)$ is strictly greater than $-1$ which implies that $\epsilon^*(R)$ is strictly greater than zero. Furthermore, notice that for any $\boldsymbol{v}$  which satisfies $||\boldsymbol{v}||\leq R$ we will have that for some $i$ between $1$ and $m$ the relation $\boldsymbol{g}_i^T\boldsymbol{v}\geq z(R) ||\boldsymbol{g}_i||$ holds. This is because only for the minimizers $\boldsymbol{v}^*,z^*$ of (\ref{eq:gap}) that $\boldsymbol{g}_i^T\boldsymbol{v}$ equates to $ z(R) ||\boldsymbol{g}_i||$ on (at least) one value of i between 1 and m. For non-optimal values of $\boldsymbol{v}$, there will exist an $i$ where the condition $\boldsymbol{g}_i^T\boldsymbol{v}\leq z(R)||\boldsymbol{g}_i||$ must be broken. Thus, consider a collection of $\hat{\boldsymbol{g}}_i$'s in which $||\hat{\boldsymbol{g}}_i-\boldsymbol{g}_i||\leq \epsilon(R)$ for all $i = 1,...,m$ and any $\boldsymbol{v}$  which satisfies $||\boldsymbol{v}||\leq R$; then,  we have that for some $i$ the following must hold:
\begin{equation} \label{eq:bounding} \hat{\boldsymbol{g}}_i^T \boldsymbol{v} = \boldsymbol{g}_i^T\boldsymbol{v}+ (\hat{\boldsymbol{g}}_i - \boldsymbol{g}_i)^T\boldsymbol{v} \geq z(R) ||\boldsymbol{g}_i||-\epsilon(R)\cdot R.\end{equation}
What we must show is that the right hands side of (\ref{eq:bounding}) is strictly greater than $-||\hat{\boldsymbol{g}}_i||$. This follows from
$$z(R) ||\boldsymbol{g}_i||-\epsilon(R)\cdot R > -||\hat{\boldsymbol{g}_i}|| \iff -\epsilon(R)\cdot R > -z(R) ||\boldsymbol{g}_i|| -||\hat{\boldsymbol{g}}_i||, $$
which holds if and only if \begin{equation} \label{eq:iff} \tfrac{1}{R}\left[ z(R) ||\boldsymbol{g}_i|| +||\hat{\boldsymbol{g}}_i||\right]> \epsilon(R). \end{equation}
Now observing that
$$\tfrac{1}{R}\left[ z(R) ||\boldsymbol{g}_i|| +||\hat{\boldsymbol{g}}_i||\right]\geq \tfrac{1}{R}\left[ z(R) ||\boldsymbol{g}_i|| +||\boldsymbol{g}_i||-\epsilon(R)\right]; $$
and, the right hand side is greater then $\epsilon(R)$ because
$$\tfrac{1}{R}\left[ z(R) ||\boldsymbol{g}_i|| +||\boldsymbol{g}_i||-\epsilon(R)\right]> \epsilon(R)$$ $$ \iff \tfrac{1}{R}\left[ z(R) ||\boldsymbol{g}_i|| +||\boldsymbol{g}_i||\right]>(1+1/R)\epsilon(R) $$ $$\iff \tfrac{z(R)+1}{R+1}||\boldsymbol{g}_i||>\epsilon(R);$$
and, by the definition of $\epsilon(R)$, this inequality holds.

\textbf{Part 2.}
If $\boldsymbol{v}_I$ satisfies $ \boldsymbol{g}_{i}^T \boldsymbol{v}_I < -||\boldsymbol{g}_{i}|| \text{ for all i = 1, ...,m }$, then clearly $s_i = \boldsymbol{g}_{i}^T\boldsymbol{v}_I+||\boldsymbol{g}_{i}||<0$ for all $i=1,...,m$. Now, chose any $\epsilon>0$ such that $\epsilon(1+||\boldsymbol{v}_I||)<-s_i$ for all $i = 1, ..., m$. Notice that
 $$\hat{\boldsymbol{g}}_i^T\boldsymbol{v}_I = \boldsymbol{g}_i^T\boldsymbol{v}_I+ (\hat{\boldsymbol{g}}_i-\boldsymbol{g}_i)^T\boldsymbol{v}_I = \boldsymbol{g}_i^T\boldsymbol{v}_I+||\boldsymbol{g}_i||-||\boldsymbol{g}_i||+ (\hat{\boldsymbol{g}}_i-\boldsymbol{g}_i)^T\boldsymbol{v}_I ;$$
and thus
 $$\hat{\boldsymbol{g}}_i^T\boldsymbol{v}_I \leq s_i-||\boldsymbol{g}_i||+ \epsilon ||\boldsymbol{v}_I|| \leq s_i-||\hat{\boldsymbol{g}}_i||+\epsilon+ \epsilon ||\boldsymbol{v}_I||  = s_i-||\hat{\boldsymbol{g}}_i||+\epsilon (1+ ||\boldsymbol{v}_I||);$$
and, since $\epsilon$ was chosen to satisfy $\epsilon(1+||\boldsymbol{v}_I||)<-s_i$ for all $i = 1, ..., m$, we have that
$\hat{\boldsymbol{g}}_i^T\boldsymbol{v}_I<-||\hat{\boldsymbol{g}}_i||$ for all $i=1,...,m$.

\textbf{Part 3.}
If $\boldsymbol{v}_E$ satisfies $ \boldsymbol{g}_{i}^T \boldsymbol{v}_E > -||\boldsymbol{g}_{i}|| \text{ for some i = 1, ...,m }$, then clearly for one such $i = i^*$ that satisfies this inequality we have $s_{i^*} = \boldsymbol{g}_{i^*}^T\boldsymbol{v}_E+||\boldsymbol{g}_{i^*}||>0$. Now, chose any $\epsilon>0$ such that $\epsilon(1+||\boldsymbol{v}_E||)< s_{i^*}$. Notice that
 $$\hat{\boldsymbol{g}}_{i^*}^T\boldsymbol{v}_E = \boldsymbol{g}_{i^*}^T\boldsymbol{v}_E+ (\hat{\boldsymbol{g}}_{i^*}-\boldsymbol{g}_{i^*})^T\boldsymbol{v}_E = \boldsymbol{g}_{i^*}^T\boldsymbol{v}_E+||\boldsymbol{g}_{i^*}||-||\boldsymbol{g}_{i^*}||+ (\hat{\boldsymbol{g}}_{i^*}-\boldsymbol{g}_{i^*})^T\boldsymbol{v}_E ;$$
and thus
 $$\hat{\boldsymbol{g}}_{i^*}^T\boldsymbol{v}_E \geq s_{i^*}-||\boldsymbol{g}_{i^*}||- \epsilon ||\boldsymbol{v}_E|| \geq s_{i^*}-||\hat{\boldsymbol{g}}_{i^*}||-\epsilon- \epsilon ||\boldsymbol{v}_E||  = s_{i^*}-||\hat{\boldsymbol{g}}_{i^*}||-\epsilon (1+ ||\boldsymbol{v}_E||);$$
and, since $\epsilon$ was chosen to satisfy $\epsilon(1+||\boldsymbol{v}_E||)<s_{i^*}$  we have that
$\hat{\boldsymbol{g}}_{i^*}^T\boldsymbol{v}_E>-||\hat{\boldsymbol{g}}_{i^*}||$.
\end{proof}

\end{document}